\newtheorem{theorem}{Theorem}[section]
\newtheorem{remark}[theorem]{Remark}
\newtheorem{lemma}[theorem]{Lemma}
\newtheorem{question}[theorem]{Question}
\numberwithin{equation}{section}
\def\r{\mathbb{R}}
\def\rn{\mathbb{R}^N}
\def\z{\mathbb{Z}}
\def\b{\mathbb{B}}
\def\zl{\z/\ell\z}
\def\s1{\mathbb{S}^1}
\def\n{\mathbb{N}}
\def\cc{\mathbb{C}}
\def\eps{\varepsilon}
\def\io{\int_{\Omega}}
\def\irn{\int_{\r^N}}
\def\vp{\varphi}
\def\vr{\varrho}
\def\o{\Omega}
\def\bf{\boldsymbol}
\def\cC{\mathcal{C}}
\def\cD{\mathcal{D}}
\def\cH^G{\mathcal{H}}
\def\cJ{\mathcal{J}}
\def\cM{\mathcal{M}}
\def\cN{\mathcal{N}}
\def\supp{\text{supp}}
\def\bar{\overline}
\def\what{\widehat}
\def\d{\,\mathrm{d}}
\def\dist{\mathrm{dist}}
\def\div{\text{div}}
\def\p*{p^*}
\author{Mónica Clapp and Víctor A. Vicente-Benítez\footnote{V. A. Vicente-Ben\'itez was supported by CONACYT (Mexico) through the research program {\it Postdoctoral Stays for Mexico 2023(1)}}}
\title{Entire solutions to a quasilinear purely critical competitive system}
\date{\today}
\begin{document}
	\maketitle
	
\begin{abstract}
We establish the existence of a fully nontrivial solution with nonnegative components for a weakly coupled competitive system for the $p$-Laplacian in $\rn$ whose nonlinear terms are purely critical.

We also show that the purely critical equation for the $p$-Laplacian in $\rn$ has infinitely many nodal solutions.
\smallskip

\emph{Keywords:} $p$-Laplacian, critical quasilinear equation, critical quasilinear system, competitive system, entire solutions, variational methods.
\smallskip

\emph{MSC2020:} 35J92, 35J47, 35B08, 35B33, 35J20.
\end{abstract}
        
\section{Introduction}
        
We consider the weakly coupled competitive critical quasilinear system 
\begin{equation}\label{eq:rn_general_system}
\begin{cases}
-\Delta_p u_i = \sum\limits_{j=1}^\ell\beta_{ij}|u_j|^{\frac{p^*}{2}}|u_i|^{\frac{p^*}{2}-2}u_i, \\
u_i\in D^{1,p}(\rn),\qquad i=1,\ldots,\ell,
\end{cases}
\end{equation}
where $\Delta_pu:= \div\left(|\nabla u|^{p-2}\nabla u\right)$ is the $p$-Laplacian of $u$, $\frac{2N}{N+2}<p<N$, $\beta_{ii}>0$, $\beta_{ij}<0$ if $i\neq j$, $p^*:=\frac{Np}{N-p}$ is the critical Sobolev exponent, $\ell\geq 2$, and 
$$D^{1,p}(\rn):= \{ u\in L^{p^*}(\rn):\partial_iu\in L^p(\rn), \, i=1,\dots, N\}.$$

We also consider the critical quasilinear equation
\begin{equation}\label{eq:equation}
-\Delta_p w=|w|^{p^*-2}w,\qquad w\in D^{1,p}(\rn).
\end{equation}
For $p=2$ this equation is conformally equivalent to the Yamabe equation on the standard $N$-sphere, by means of the stereographic projection.

Damascelli, Merchán, Montoro, and Sciunzi \cite{dmms}, Sciunzi \cite{s} and Vétois \cite{v} showed that, for any $p\in(1,N)$, the problem \eqref{eq:equation} has a unique positive solution up to translations and dilations, which is the least energy solution. The existence of nodal solutions was established in \cite{cl}, where it is shown that \eqref{eq:equation} has a finite number of nonradial solutions that change sign, which depends on the dimension $N$.

The existence of infinitely many nodal solutions to \eqref{eq:equation} is known only for $p=2$. It was first proved by W. Ding in \cite{di} using the invariance of the problem under conformal transformations. Del Pino, Musso, Pacard and Pistoia exhibited in \cite{dmpp} another family of sign-changing solutions, using the Lyapunov–Schmidt reduction method. None of these methods apply to the quasilinear case. As shown in \cite{l}, the $p$-Laplacian is not invariant under conformal transformations when $p\neq 2$. On the other hand, the Lyapunov–Schmidt procedure cannot be used because the linearized operator for the $p$-Laplacian is not well understood.

In \cite{c} another type of nodal solutions for $p=2$ was obtained, using only the invariance of the problem under linear isometries. Inspired by this approach we prove the following result.
        
\begin{theorem}\label{thm:multiplicity}
Let $N\geq 4$. Then, for any $p\in(1,N)$, the quasilinear problem \eqref{eq:equation} has an infinite sequence of nonradial sign-changing solutions that are not equivalent under translation and dilation.
\end{theorem}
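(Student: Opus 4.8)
The plan is to use the principle of symmetric criticality on suitably symmetric subspaces of $D^{1,p}(\rn)$, where the energy regains compactness, and then to vary the symmetry in order to produce infinitely many solutions at distinct energy levels. Set $J(u):=\frac1p\irn|\nabla u|^p-\frac1{p^*}\irn|u|^{p^*}$, whose nontrivial critical points are the nontrivial solutions of \eqref{eq:equation}; if $S$ is the best Sobolev constant, every nonzero critical point satisfies $J(u)=\frac1N\|\nabla u\|_p^p\ge\frac1N S^{N/p}$. For a closed subgroup $G\le O(N)$ and a continuous surjective homomorphism $\phi\colon G\to\z/2\z=\{1,-1\}$ I would work in
\[
D^{1,p}_{G,\phi}:=\{u\in D^{1,p}(\rn): u(gx)=\phi(g)\,u(x)\ \text{for all }g\in G,\ \text{a.e. }x\}.
\]
Since $G$ acts on $D^{1,p}(\rn)$ by linear isometries via $g\cdot u:=\phi(g)\,(u\circ g^{-1})$, since $J$ is invariant under this action, and since $D^{1,p}_{G,\phi}$ is its fixed-point space, the principle of symmetric criticality reduces the problem to finding critical points of $J|_{D^{1,p}_{G,\phi}}$. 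As $\phi$ is surjective, any nonzero $u\in D^{1,p}_{G,\phi}$ changes sign, and, because every element of $O(N)$ preserves $|x|$, such a $u$ cannot be radial. Hence it suffices to exhibit infinitely many pairs $(G,\phi)$ for which $J$ attains a positive minimum $c_{G,\phi}$ on the symmetric Nehari set $\{u\in D^{1,p}_{G,\phi}\setminus\{0\}:\|\nabla u\|_p^p=\|u\|_{p^*}^{p^*}\}$, with the minimizers realizing infinitely many distinct values of $J$; inequivalence under translation and dilation then follows since these transformations leave $J$ invariant.

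The choice of groups is where $N\ge4$ enters. I would write $\rn=\cc\times\cc\times\r^{N-4}$ (the last factor absent when $N=4$), and for each integer $n\ge2$ take $G_n\le O(N)$ generated by the circle action $\e^{\i\theta}\cdot(z_1,z_2,v):=(\e^{\i\theta}z_1,\e^{\i n\theta}z_2,v)$ and the reflection $\kappa\cdot(z_1,z_2,v):=(\bar{z_1},\bar{z_2},v)$; since $\kappa$ conjugates $\e^{\i\theta}$ to $\e^{-\i\theta}$, one has $G_n\cong O(2)$, and $\phi_n$ is the component homomorphism $O(2)\to\{1,-1\}$, which is surjective. A short computation with the ring of invariants of the circle action shows that $D^{1,p}_{G_n,\phi_n}$ consists of the functions of the form $F\bigl(|z_1|^2,|z_2|^2,\Re(z_1^n\bar{z_2}),\Im(z_1^n\bar{z_2}),v\bigr)$ that are radial in $v$ and odd in the fourth slot; this space is nontrivial, each of its elements vanishes on (and changes sign across) $\{\Im(z_1^n\bar{z_2})=0\}$, and is supported in $\{z_1\neq0,\ z_2\neq0\}$.

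The heart of the argument is that $J|_{D^{1,p}_{G_n,\phi_n}}$ satisfies the Palais--Smale condition at every level. For this I would invoke a Struwe-type global compactness description of Palais--Smale sequences of $J$ in $D^{1,p}(\rn)$: after passing to a subsequence, such a sequence converges strongly to a solution of \eqref{eq:equation} plus a finite sum of rescaled Aubin--Talenti instantons, each concentrating at a point of $\rn$, traveling to infinity, or spreading out. Invariance under $G_n$ forces every concentration point to have a finite $G_n$-orbit, hence to lie in the fixed-point set $\{(0,0)\}\times\r^{N-4}$ of the circle subgroup (any point outside it has an infinite orbit, which would yield infinitely many instantons and infinite energy); and on that set, as well as at the origin and at infinity, $\kappa$ fixes the point while $\phi_n(\kappa)=-1$, which is incompatible with a locally radially symmetric instanton there. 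Therefore no instanton occurs, Palais--Smale sequences are relatively compact, and $c_n:=c_{G_n,\phi_n}>0$ is attained at some $u_n$, which by the first step is a nonradial, sign-changing solution of \eqref{eq:equation}.

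Finally, I would show that $c_n\to\infty$: along each circle $\{z_1=r\e^{\i\alpha},\ z_2=r,\ v=0\}$ a function in $D^{1,p}_{G_n,\phi_n}$ changes sign $2n$ times (since $\Im(z_1^n\bar{z_2})$ does), and more quantitatively its support is squeezed into a region that is uniformly thin in the $\arg z_1$ direction as $n\to\infty$, so a slicing and Sobolev/Poincaré argument forces $\|\nabla u\|_p^p$, hence $c_n=\frac1N\|\nabla u_n\|_p^p$, to diverge. Consequently the $u_n$ take infinitely many distinct values of $J$, yielding infinitely many nonradial sign-changing solutions of \eqref{eq:equation} that are pairwise inequivalent under translation and dilation, which would establish Theorem~\ref{thm:multiplicity}. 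The two steps I expect to be delicate are (i) the $p$-Laplacian global compactness decomposition behind the Palais--Smale property—there is no Hilbert structure and no workable linearization, so the profile decomposition must be extracted directly via concentration-compactness, and ruling out instantons "at infinity" in $D^{1,p}(\rn)$ requires care—and (ii) the quantitative divergence $c_n\to\infty$, that is, converting the angular twist imposed by $G_n$ into a genuine energy gain; the remaining steps are routine.
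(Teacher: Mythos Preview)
Your high-level strategy---symmetric criticality on the $\phi$-equivariant subspace for a closed $\Gamma\le O(N)$ and a surjective $\phi:\Gamma\to\mathbb{Z}_2$, then varying the group---is the paper's. The implementation differs. The paper takes $\Gamma_\ell=\langle G,\varrho_\ell\rangle$ with the \emph{fixed} circle action $g\cdot(z_1,z_2,y)=(gz_1,\bar g z_2,y)$ and a varying finite-order element $\varrho_\ell$ carrying the sign; existence is then quoted as a black box (Theorem~\ref{thm:cl}, from \cite{cl}) rather than reproved via concentration-compactness. Crucially, to distinguish the solutions the paper does \emph{not} compare energies: since $\varrho_m=\varrho_\ell^{\,n}$ when $\ell=nm$, one has $w_\ell(\varrho_m x)=(-1)^n w_\ell(x)=w_\ell(x)$ whenever $n$ is even, whereas $w_m(\varrho_m x)=-w_m(x)$, so $w_\ell\neq w_m$; along $\ell=2^k$ this yields infinitely many. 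No energy estimate is needed.

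Your distinction via $c_n\to\infty$ is where there is a real gap. The heuristic that ``the support is squeezed into a region uniformly thin in $\arg z_1$'' is false: a function odd in $\Im(z_1^n\bar z_2)$ vanishes on the zero set of that invariant but can have full support (e.g.\ $\Im(z_1^n\bar z_2)\,\psi(|z_1|,|z_2|,|v|)$). What you actually need is an improved Sobolev constant on $D^{1,p}_{G_n,\phi_n}$ diverging with $n$, and your ``slicing and Poincar\'e'' sketch does not explain how to prevent a competitor from pushing mass toward large $|z_1|$ (where the angular penalty $\sim n/|z_1|$ becomes harmless) or into the $z_2,v$ directions; it is not a priori clear that $(c_n)$ is unbounded. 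A secondary point: in your compactness step, a bubble at a fixed point of the circle is not forced to be a radial Aubin--Talenti instanton but rather a solution of \eqref{eq:equation} in $\rn$ inheriting the full $(G_n,\phi_n)$-equivariance, so $\kappa$-oddness does not exclude it; for a minimizing sequence such a bubble \emph{is} the sought minimizer (compare Theorem~\ref{thm:minimizing}), but the Palais--Smale condition need not hold at every level as you claim.
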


Next, we look at the system. Motivated by their physical applications, weakly coupled elliptic systems have received much attention in recent years, specially in the semilinear case ($p=2$). 

Note that every solution $w$ to \eqref{eq:equation} gives rise to a solution of the system \eqref{eq:rn_general_system} whose $i$-th component is $\beta_{ii}^{(p-N)/p^2}w$ and all other components are trivial. The interesting question is whether \eqref{eq:rn_general_system} has a \emph{fully nontrivial} solution, i.e., a solution $\bf u=(u_1,\ldots,u_\ell)$  such that all components $u_i$ are nontrivial. This is not an easy matter. 

First, we observe that the system cannot be solved by minimization.
        
\begin{theorem}\label{thm:nonexistence}
The system \eqref{eq:rn_general_system} does not have a least energy fully nontrivial solution. 
\end{theorem}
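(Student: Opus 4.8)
The plan is to argue by contradiction, assuming the system \eqref{eq:rn_general_system} admits a least energy fully nontrivial solution $\mathbf{u}=(u_1,\ldots,u_\ell)$, and to derive a violation of the sharp Sobolev inequality. Let me set up the variational framework: the energy functional associated to \eqref{eq:rn_general_system} is
\[
\cJ(\mathbf{u}) = \frac1p\sum_{i=1}^\ell\irn|\nabla u_i|^p\,\d x - \frac1{p^*}\sum_{i,j=1}^\ell\beta_{ij}\irn|u_i|^{p^*/2}|u_j|^{p^*/2}\,\d x,
\]
and fully nontrivial solutions live on the Nehari-type set where each component equation is tested against $u_i$ itself. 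Because $\cJ$ is homogeneous of the right degrees, at a solution one has $\cJ(\mathbf{u}) = \left(\frac1p-\frac1{p^*}\right)\sum_i\irn|\nabla u_i|^p$, so the least energy level $c$ equals $\frac1N$ times the infimum of $\sum_i\|\nabla u_i\|_p^p$ over the constraint set.

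The key computation is a comparison of $c$ with the least energy level $c_\infty$ of the single equation \eqref{eq:equation}, which by \cite{dmms,s,v} is achieved by the Aubin–Talenti-type instanton and equals $\frac1N S^{N/p}$ with $S$ the best $p$-Sobolev constant. First I would show $c \le \ell\, c_\infty$ is \emph{not} the right bound; rather, using a single instanton $W$ placed in one component and testing, one gets that $c$ is strictly below the energy of any configuration built from well-separated bumps in different components, because the coupling terms $\beta_{ij}<0$ with $i\neq j$ are a penalty, so separated bumps cost \emph{more}, not less — hence a minimizing sequence wants its components to overlap. Conversely, if the components overlap, the cross terms $-\beta_{ij}|u_i|^{p^*/2}|u_j|^{p^*/2}$ are strictly negative, so one can compare $\cJ(\mathbf{u})$ with the decoupled functional and deduce $c > $ something, and simultaneously each individual component, after rescaling to the Nehari manifold of the single equation, carries energy at least $c_\infty$; summing gives $c \ge \ell c_\infty$ while the overlap forces strict inequality in the Sobolev quotient for at least one component unless that component is itself an instanton — but instantons in different components cannot overlap and still satisfy the coupled equations with $\beta_{ij}<0$. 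Making this precise, one shows $c$ is never attained: any minimizing sequence must have its mass in each component concentrate (by the profile decomposition / concentration-compactness for $\Dp$, as in \cite{cl}), and the only possible profiles are instantons, whose supports would have to be disjoint at the limit, so the weak limit is not fully nontrivial; on the other hand a fully nontrivial minimizer would have to realize $c = \ell c_\infty$ with all cross terms vanishing, which is impossible since $|u_i|^{p^*/2}|u_j|^{p^*/2}>0$ on the overlap of the supports of two nontrivial continuous functions.

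The cleanest route, which I would actually write up, is: (i) show by a scaling/rearrangement argument that $c \ge \ell c_\infty$; (ii) show that equality $c = \ell c_\infty$ would be needed for a minimizer, by the relation between $\cJ$ at a critical point and $\sum_i\|\nabla u_i\|_p^p$; (iii) show $c = \ell c_\infty$ is impossible for a fully nontrivial configuration because it would force $\irn|u_i|^{p^*/2}|u_j|^{p^*/2}=0$ for all $i\neq j$ at the minimizer, contradicting that all $u_i$ are nontrivial (solutions of \eqref{eq:equation}-type equations have full support by strong maximum principle / unique continuation, and two such functions on $\rn$ cannot have disjoint supports). I expect step (iii) to be the main obstacle: one must rule out minimizers with genuinely disjoint or "almost disjoint" component supports, which requires either the strong maximum principle for the $p$-Laplacian to guarantee each $|u_i|>0$ a.e., or a more delicate argument using that a $\Dp$-solution of $-\Delta_p u_i = (\text{nonneg.})\,|u_i|^{p^*-2}u_i$ with $u_i\ge 0$, $u_i\not\equiv 0$ is strictly positive — and then disjointness is flatly impossible in $\rn$. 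The subcritical-type interaction estimates (à la Brezis–Lieb) needed to pass to the limit along minimizing sequences are routine given the concentration-compactness machinery already invoked in \cite{cl}, so I would cite that rather than reprove it.
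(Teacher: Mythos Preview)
Your ``cleanest route'' (i)--(iii) is essentially the paper's argument, but step (ii) has a genuine gap, and the heuristics in your opening paragraph point in the wrong direction.

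The missing ingredient is the \emph{upper} bound $c\le\sum_{i=1}^\ell m_i$, where $m_i=\frac{1}{N}\beta_{ii}^{-(N-p)/p}S_p^{N/p}$ (the $\beta_{ii}$ enter, so ``$\ell c_\infty$'' is not the right quantity). The paper obtains it by taking near-minimizers $\vp_{i,k}\in M_i$ of the decoupled equations, rescaling them to have pairwise disjoint compact supports, and observing that the resulting $\ell$-tuple lies on $\cM$ with energy $\sum_i J_i(\vp_{i,k})\to\sum_i m_i$. Without this bound your step (ii) does not follow: from (i) alone you only know $c\ge\sum_i m_i$, and the identity $\cJ(\bf u)=\frac1N\sum_i\|u_i\|_p^p$ on $\cM$ adds nothing. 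It is the squeeze
\[
\sum_i m_i\ \le\ \frac1N\sum_i\|t_iu_i\|_p^p\ \le\ \frac1N\sum_i\|u_i\|_p^p\ =\ c\ \le\ \sum_i m_i
\]
that forces $t_i=1$ for every $i$, hence every cross integral $\int|u_i|^{p^*/2}|u_j|^{p^*/2}$ vanishes, hence each $|u_i|$ is a least-energy nonnegative solution of the single equation and therefore strictly positive by the strong maximum principle --- which contradicts $u_iu_j=0$ a.e.

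Your heuristic paragraph has the sign of the competition backwards. For $i\ne j$ one has $-\beta_{ij}>0$, so the terms $-\beta_{ij}\int|u_i|^{p^*/2}|u_j|^{p^*/2}$ are \emph{positive}, not negative. On the constraint set $\cM$ (where $\cJ=\frac1N\sum_i\|u_i\|_p^p$) overlap \emph{raises} the energy above $\sum_i m_i$, so minimizing sequences want their components to \emph{separate}, not overlap; the disjoint-bump construction you dismissed is precisely what delivers the crucial upper bound. Finally, no concentration--compactness or profile decomposition is needed here: once the upper bound is in place, the nonexistence proof is an elementary energy comparison plus the strong maximum principle for the $p$-Laplacian.
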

        
This is a consequence of the invariance of the system under dilations and the strong maximum principle for the $p$-Laplacian. In fact, the same is true for any domain $\o$ in $\rn$; see Theorem \ref{thm:o_nonexistence} below. For a system of two equations this was proved by Chen and Zou in \cite[Theorem 1.6]{cz} when $p=2$, and by Guo, Perera and Zou in \cite[Theorem 1.1]{gpz} for general $p$.
        
The question now is whether there are higher energy fully nontrivial solutions. Some existence results are known for $p=2$. Guo, Li and Wei \cite{glw} studied the system \eqref{eq:rn_general_system} of two equations in dimension $N=3$ and established the existence of nonnegative solutions with $k$ peaks for sufficiently large $k$ using the Lyapunov-Schmidt reduction method. The existence of infinitely many solutions to \eqref{eq:rn_general_system} was established in \cite{cp1,cf,csz} by exploiting the fact that the system is invariant under conformal transformations of $\rn$ when $p=2$. As we mentioned above, none of these methods apply if $p\neq 2$. Furthermore, even for $p=2$, the Lyapunov-Schmidt procedure can only be used for small dimensions where the variational functional has enough regularity.

Cooperative quasilinear systems in $\rn$ with critical nonlinear terms have been considered in \cite{az,zz,gpz}, and results in bounded domains are given, for example, in \cite{ccmv, ds, dx, mp}. But we know of no existence result for the \emph{competitive} critical system \eqref{eq:rn_general_system} with $p\neq 2$. 
        
Our goal is to use symmetries to produce solutions. Therefore, we consider the symmetric system
\begin{equation}\label{eq:rn_symmetric_system}
\begin{cases}
-\Delta_p u_i = |u_i|^{p^*-2}u_i+\sum\limits_{\substack{j=1\\j\neq i}}^\ell\beta|u_j|^{\frac{p^*}{2}}|u_i|^{\frac{p^*}{2}-2}u_i, \\
u_i\in D^{1,p}(\rn),\qquad i=1,\ldots,\ell,
\end{cases}
\end{equation}
where $\frac{2N}{N+2}<p<N$, $N\geq 4$, $\beta<0$, $p^*:=\frac{Np}{N-p}$ and $\ell\geq 2$.
        
We now describe the symmetries we consider. We write $\rn\equiv\cc^2\times\r^{N-4}$ and a point in $\rn$ as $x=(z,y)$ with $z=(z_1,z_2)\in\cc\times\cc$ and $y\in\r^{N-4}$. Let $\tau:\cc^2\to\cc^2$ be given by
\begin{equation*}
\tau(z_1,z_2):=(-\overline{z}_2,\overline{z}_1),
\end{equation*}
where $\overline{z}$ is the complex conjugate of $z$. We define $G$ to be the group whose elements are the unit complex numbers acting on $\rn$ by
\begin{equation}\label{eq:G}
gx:=(gz_1,\overline{g}z_2,y),\qquad\text{for \ }g\in G, \ \ x=(z_1,z_2,y)\in\cc\times\cc\times\r^{N-4},
\end{equation}
and take $\vr_\ell:\rn\to\rn$ to be
\begin{equation}\label{eq:rho}
\vr_\ell(z,y):=\left(\Big(\cos\frac{\pi}{\ell}\Big)z+\Big(\sin\frac{\pi}{\ell}\Big)\tau z,\,y\right)\qquad\text{for \ }(z,y)\in\cc^2\times\r^{N-4}.
\end{equation}
We shall look for solutions $\bf u=(u_1,\ldots,u_\ell)$ that satisfy
\begin{itemize}
\item[$(\bf{P_1})$] $u_j$ is $G$-invariant for every $j=1,\ldots,\ell$, i.e., $u_j(gx)=u_j(x)$ for any $g\in G$ and $x\in\rn$.
\item[$(\bf{P_2})$] $u_{j+1}=u_j\circ\vr_\ell$ \ for every $j=1,\ldots,\ell$, \ where \ $u_{\ell+1}:=u_1$.
\end{itemize}
They are called \emph{pinwheel solutions}. We prove the following result.
        
        \begin{theorem}\label{thm:existence}
        The system \eqref{eq:rn_symmetric_system} has a nontrivial pinwheel solution which has least energy among all nontrivial pinwheel solutions and whose components are nonnegative. 
        \end{theorem}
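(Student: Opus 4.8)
The plan is to realize pinwheel solutions as critical points of the energy functional on the symmetric subspace and to obtain the least-energy one by minimizing over a symmetric Nehari-type set, using the symmetries to recover the compactness that the purely critical exponent destroys. Concretely, let
\[
J(\bf u)=\frac1p\sum_{i=1}^{\ell}\|\nabla u_i\|_p^p-\frac1{p^*}\sum_{i=1}^{\ell}\|u_i\|_{p^*}^{p^*}-\frac{\beta}{p^*}\sum_{i\ne j}\irn|u_i|^{p^*/2}|u_j|^{p^*/2}
\]
be the energy functional of \eqref{eq:rn_symmetric_system} on $D^{1,p}(\rn)^\ell$; it is of class $C^1$ because the hypothesis $p>\tfrac{2N}{N+2}$ makes $p^*>2$. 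Let $\Gamma$ be the compact group of linear isometries of $D^{1,p}(\rn)^\ell$ generated by the diagonal action $g\cdot\bf u:=(u_1\circ g^{-1},\dots,u_\ell\circ g^{-1})$ for $g\in G$ and by $(u_1,\dots,u_\ell)\mapsto(u_\ell\circ\vr_\ell,u_1\circ\vr_\ell,\dots,u_{\ell-1}\circ\vr_\ell)$; this is a group because $\vr_\ell^\ell$ acts on $\cc^2\times\r^{N-4}$ as $(z,y)\mapsto(-z,y)$, that is, as $-1\in G$. Then $J$ is $\Gamma$-invariant and its fixed-point subspace $\mathcal H$ consists exactly of the pinwheel configurations $(\bf{P_1})$--$(\bf{P_2})$, so by the principle of symmetric criticality it suffices to find a critical point of $J|_{\mathcal H}$. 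Since $u\mapsto(u,u\circ\vr_\ell,\dots,u\circ\vr_\ell^{\ell-1})$ is a linear isomorphism of $D^{1,p}(\rn)^G$ onto $\mathcal H$ (well defined because $G$-invariance forces $u\circ\vr_\ell^\ell=u$), the problem reduces to finding a critical point of
\[
\cJ(u)=\frac\ell p\|\nabla u\|_p^p-\frac\ell{p^*}\|u\|_{p^*}^{p^*}-\frac{\ell\beta}{p^*}\sum_{k=1}^{\ell-1}\irn|u|^{p^*/2}|u\circ\vr_\ell^k|^{p^*/2},\qquad u\in D^{1,p}(\rn)^G.
\]

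Next I would work on the symmetric Nehari set. Put $Q(u):=\|u\|_{p^*}^{p^*}+\beta\sum_{k=1}^{\ell-1}\irn|u|^{p^*/2}|u\circ\vr_\ell^k|^{p^*/2}$ and $\cN:=\{u\in D^{1,p}(\rn)^G\setminus\{0\}:\|\nabla u\|_p^p=Q(u)\}$. Every nontrivial pinwheel solution lies on $\cN$, and on $\cN$ one has $\cJ(u)=\tfrac\ell N\|\nabla u\|_p^p$. Since $\beta<0$, the coupling terms are non-positive, so $\|\nabla u\|_p^p=Q(u)\le\|u\|_{p^*}^{p^*}$ on $\cN$, and the Sobolev inequality then forces $\|\nabla u\|_p^p\ge S_p^{N/p}$ there; hence $c:=\inf_{\cN}\cJ\ge\tfrac\ell N S_p^{N/p}>0$. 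The set $\cN$ is nonempty: $\vr_\ell$ commutes with $G$, and for any $\xi=(z,y)$ with $z\ne0$ the orbits $G\xi,\vr_\ell G\xi,\dots,\vr_\ell^{\ell-1}G\xi$ are pairwise disjoint, so one may take a nonzero $G$-invariant $u$ supported in a $G$-invariant set $A$ with $\vr_\ell^k(A)\cap A=\emptyset$ for $1\le k\le\ell-1$; then all coupling terms vanish, $Q(u)=\|u\|_{p^*}^{p^*}>0$, and a positive multiple of $u$ lies on $\cN$. Moreover $\cN$ is a complete $C^1$-manifold and a natural constraint for $\cJ$: when $Q(u)>0$ the fibering map $s\mapsto\cJ(su)$ has a unique positive critical point, at which it attains its maximum, and the derivative of $s\mapsto\cJ'(su)[su]$ at $s=1$ equals $\ell(p-p^*)\|\nabla u\|_p^p<0$ on $\cN$. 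Finally $Q(|u|)=Q(u)$ and $\cJ(|u|)=\cJ(u)$, so $|u|\in\cN$ whenever $u\in\cN$; hence if $\cJ$ attains $c$ over $\cN$ it does so at a nonnegative function, which is then a critical point of $\cJ$.

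It then remains to show that $c$ is attained. I would take $u_n\in\cN$ with $\cJ(u_n)\to c$ and, by Ekeland's principle, assume $u_n$ to be a bounded Palais--Smale sequence for $\cJ$ on $D^{1,p}(\rn)^G$. The key input is a global-compactness (bubbling) description of $G$-invariant Palais--Smale sequences of this quasilinear system, in the spirit of Struwe's decomposition and its $p$-Laplacian analogues (cf.\ \cite{cl}): after passing to a subsequence, $u_n=u_\infty+\sum_{j=1}^{m}w_n^j+o(1)$ in $D^{1,p}(\rn)$, where $u_\infty$ is the weak limit (a pinwheel solution of \eqref{eq:rn_symmetric_system}, since weak limits of Palais--Smale sequences of $-\Delta_p$ are critical points), each $w_n^j$ is obtained from a nontrivial solution $v^j$ of the reduced equation by a dilation and a translation, and $c=\cJ(u_\infty)+\sum_{j=1}^{m}\cJ(v^j)$. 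The decisive point is that the $G$-invariance confines the $L^{p^*}$-concentration to the fixed-point set $\{0\}\times\r^{N-4}$ of $G$: $L^{p^*}$-mass carried in an $\eps$-tube about a $d$-dimensional compact set while $\|\nabla\cdot\|_p$ stays bounded is of order $\eps^{dp/(N-p)}$, which vanishes for $d\ge1$, so no mass can sit on a $G$-orbit of positive dimension. Since $\vr_\ell$ fixes $\{0\}\times\r^{N-4}$ pointwise as well, and dilations centred there and translations along $\r^{N-4}$ commute with both $G$ and $\vr_\ell$, each $v^j$ is again a $G$-invariant pinwheel solution of \eqref{eq:rn_symmetric_system}, whence $\cJ(v^j)\ge c$ and, likewise, $\cJ(u_\infty)\ge c$ when $u_\infty\ne0$. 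Vanishing ($u_\infty=0$ and $m=0$) is impossible, since it would give $\|u_n\|_{p^*}\to0$ and the Nehari identity (with $\beta<0$) would then force $\|\nabla u_n\|_p\to0$, contradicting $\|\nabla u_n\|_p^p\ge S_p^{N/p}$. Therefore $c=\cJ(u_\infty)+\sum_j\cJ(v^j)$ is a sum of terms each equal to $0$ or at least $c$, with positive total, so exactly one of them equals $c$; the corresponding function — $u_\infty$ or some $v^j$ — is a minimizer of $\cJ$ on $\cN$. Passing to its absolute value (still a minimizer, hence a critical point of $\cJ$) and unfolding via $u\mapsto(u,u\circ\vr_\ell,\dots,u\circ\vr_\ell^{\ell-1})$, symmetric criticality produces a nontrivial pinwheel solution of \eqref{eq:rn_symmetric_system} with nonnegative components and least energy among all nontrivial pinwheel solutions.

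The hard part will be this last compactness step. For $p\ne2$ conformal invariance is unavailable, so the quasilinear bubbling analysis must be carried out directly for the symmetric system — one needs a.e.\ convergence of the gradients (the $(S_+)$-property of $-\Delta_p$), the identification of the concentration set through the scaling estimate that excludes positive-dimensional orbits, and the Brezis--Lieb splitting of the competitive coupling terms, together with a careful treatment of the profiles whose centres approach $\{0\}\times\r^{N-4}$ at a rate comparable to their scale. Notably, no energy gap estimate is needed: because the nonlinearities are purely critical the limiting problem is again \eqref{eq:rn_symmetric_system}, so every "lost" profile is itself a pinwheel solution of energy $\ge c$, and the energy balance alone forces compactness. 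The remaining ingredients (symmetric criticality, the natural-constraint property, and the energy arithmetic) are routine once $p^*>2$.
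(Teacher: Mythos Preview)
Your plan is sound and in several respects cleaner than the paper's route. The paper minimizes $\cJ$ over $\cN(\b)$ on the unit ball, uses Ekeland to obtain a Palais--Smale sequence, extracts a single concentration profile, and must then distinguish three limit domains (the ball itself, a half-space $\mathbb{H}$, or $\rn$) according to whether and how fast the concentration centre approaches $\partial\b$; since Lemma~\ref{lem:same infimum} gives $c(\b)=c(\mathbb{H})=c(\rn)$, in the first two cases the solution is simply extended by zero to $\rn$. By working directly in $\rn$ you bypass the half-space analysis entirely, and by collapsing the pinwheel system to a scalar functional on $D^{1,p}(\rn)^G$ you streamline the bookkeeping. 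The energy arithmetic (each profile has energy $\ge c$, hence exactly one survives) is identical in both approaches; in fact a one-bubble extraction at level $c$ suffices, so the full Struwe splitting you invoke is more than you actually need to establish.

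One step does need repair: the tube estimate you cite to confine concentration to $\{0\}\times\r^{N-4}$ is not correct as stated --- a $G$-invariant function \emph{can} carry a fixed amount of $L^{p^*}$-mass in an arbitrarily thin tube about a circle orbit while $\|\nabla\cdot\|_p$ stays bounded, and that is exactly what a rescaled bubble centred off the fixed-point set does. The right mechanism is the dichotomy of Lemma~\ref{lem:G}: either the concentration centre $\zeta_k$ stays within $O(\eps_k)$ of $\{0\}\times\r^{N-4}$, or for every $m$ one finds $g_1,\dots,g_m\in G$ with $\eps_k^{-1}|g_i\zeta_k-g_j\zeta_k|\to\infty$; $G$-invariance then places the same mass $\delta>0$ near each $g_i\zeta_k$, forcing $|u_{1,k}|_{p^*}^{p^*}\ge m\delta$ for all $m$ and contradicting boundedness. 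With this correction your rescaling centres lie on the common fixed-point set of $G$ and $\vr_\ell$, the dilations and translations commute with both actions, and the extracted profile is a pinwheel solution of \eqref{eq:rn_symmetric_system} in $\rn$ of energy $\ge c$, which closes your argument.
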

        
        For $p=2$ this result was recently proved in \cite[Theorem 1.1]{cfs}. The symmetries used there are the same as those just described and the basic strategy of the proof is formally the same. It is based on a careful analysis of the behavior of minimizing sequences for the variational functional associated with the system in the unit ball; see Theorem \ref{thm:minimizing}. But here we find a substancial difference from the semilinear case. When $p=2$ one can use a unique continuation principle for systems, which was established in \cite{chs}, to rule out the existence of nontrivial pinwheel solutions in a ball and in a half-space, leaving as the only option the existence of a pinwheel solution in $\rn$; see Remark \ref{rem:unique continuation}. Unfortunately, a unique continuation principle is not available for $p\neq 2$, not even for the single equation \eqref{eq:equation}; see \cite{gm}. We avoid this difficulty by showing that a least energy pinwheel solution in a ball or a half-space gives rise to a least energy pinwheel solution in $\rn$.
        
Theorem \ref{thm:existence} appears to be the first existence result for the purely critical competitive quasilinear system \eqref{eq:rn_general_system}.

We note that pinwheel solutions for subcritical Schrödinger systems have been exhibited in \cite{cp2, cssv, pv, pw, ww}.
        
We close this introduction with an open question. It is well known that the $p$-Laplacian satisfies a strong maximum principle; see \cite[Theorem 2.5.1]{ps}. Since we are assuming that $\beta_{ij}<0$ if $i\neq j$, this principle cannot be applied to each equation of the system \eqref{eq:rn_general_system}. But perhaps a weak unique continuation property for systems, similar to the one obtained in \cite{chs}, holds true for solutions with \emph{nonnegative} components. This would allow us to exclude the existence of a least energy pinwheel solution in a ball and in a half-space, as happens in the semilinear case. Our precise question is the following:
        
        \begin{question}
        Is it true that, if $\bf u=(u_1,\ldots,u_\ell)$ is a solution to the system \eqref{eq:rn_general_system} with $u_i\geq 0$ for all $i$ and $\bf u\equiv \bf 0$ in some open subset of $\rn$, then $\bf u\equiv \bf 0$ in all of $\rn$?
        \end{question}
        
The paper is organized as follows. In Section \ref{sec:nonexistence} we prove a general nonexistence result that includes Theorem \ref{thm:nonexistence}. The variational setting for pinwheel solutions is discussed in Section \ref{sec:pinwheel}. In Section \ref{sec:existence} we analize the behavior of minimizing sequences for the symmetric system in the unit ball and we prove Theorem \ref{thm:existence}. Section \ref{sec:nodal} is devoted to the proof of Theorem \ref{thm:multiplicity}.

	\section{Nonexistence of ground state solutions}
	\label{sec:nonexistence}
	
	Consider the system
	\begin{equation}\label{eq:o_system}
		\begin{cases}
			-\Delta_p u_i = \sum\limits_{j=1}^\ell\beta_{ij}|u_j|^{\frac{\p*}{2}}|u_i|^{\frac{\p*}{2}-2}u_i, \\
			u_i\in D^{1,p}_0(\o),\qquad i=1,\ldots,\ell,
		\end{cases}
	\end{equation}
	where $\o$ is a domain in $\rn$, $\frac{2N}{N+2}<p<N$, $\beta_{ii}>0$, $\beta_{ij}<0$ if $i\neq j$ and $p^*:=\frac{Np}{N-p}$. 
	
	The space $D_0^{1,p}(\o)$ is the closure of $\cC^\infty_c(\o)$ in the Banach space 
	$$D^{1,p}(\rn):= \{ u\in L^{\p*}(\rn): \partial_iu\in L^p(\rn), \, i=1,\dots, N\}$$ 
	equipped with its standard norm 
	$$\|u\|_p:= \left(\int_{\rn}|\nabla u(x)|^p\d x\right)^\frac{1}{p}.$$
We write $|\cdot|_q$ for the usual norm of $L^q(\rn)$, $q\geq 1$, and denote by
	\begin{equation*}
		S_p:= \inf_{\overset{u\in D^{1,p}(\rn)}{u\neq 0}}\frac{\|u\|^p_p}{|u|_{p^*}^p}
	\end{equation*}
the best constant for the Sobolev embedding $D^{1,p}(\rn)\hookrightarrow L^{p^*}(\rn)$.
	
	Consider the $\ell$-fold cartesian product $(D^{1,p}_0(\o))^{\ell}$ with the norm
	$$\|\bf u\|_p:=\left(\sum_{i=1}^\ell\|u_i\|_p^p\right)^\frac{1}{p},\qquad \bf u=(u_1,\dots,u_{\ell}),$$
and the functional $\cJ :(D^{1,p}_0(\o))^{\ell}\to\r$ given by
	$$\cJ(\bf u):=  \frac{1}{p}\sum_{i=1}^\ell\|u_i\|_p^p -\frac{1}{p^*}\sum_{i,j=1}^\ell\beta_{ij}\irn |u_j|^{\frac{p^*}{2}}|u_i|^{\frac{p^*}{2}}.$$
Since, by assumption, $p^*>2$, this is a $\cC^1$-functional.  Its $i$-th partial derivative at each $\bf u=(u_1,\dots,u_{\ell})\in (D^{1,p}_0(\o))^{\ell}$ is given by
		\begin{equation*}
			\partial_i\mathcal{J}(\bf u)v=\int_{\rn}|\nabla u_i|^{p-2}\nabla u_i\cdot\nabla v-\sum_{j=1}^\ell\beta_{ij}\irn|u_j|^{\frac{\p*}{2}}|u_i|^{\frac{p^*}{2}-2}u_iv\qquad\text{for all \ }v\in D^{1,p}_0(\o).
		\end{equation*}
The solutions $\bf u=(u_1,\dots, u_{\ell})$ of the system \eqref{eq:o_system} are the critical points of $\cJ$. The fully nontrivial ones, i.e., those whose components are nontrivial, belong to the Nehari-type set
\begin{equation} \label{M(o)}
\cM(\o):=\{\bf u\in(D^{1,p}_0(\o))^\ell:u_i\neq 0 \text{ \ and \ } \partial_i\mathcal{J}(\bf u)u_i=0\text{ \ for all \ } i=1,\ldots,N\}.
\end{equation}
Define
	$$\mu(\o):=\inf_{\bf u\in\cM(\o)}\cJ(u).$$
A function $\bf u\in\cM(\o)$ such that $\cJ(u)=\mu(\o)$ is called a \emph{least energy fully nontrivial solution}.

In order to estimate $\mu(\o)$, for each $i=1,\ldots,\ell$ we consider the critical quasilinear equation
	\begin{equation}\label{eq:singleequation}
		-\Delta_p w = \beta_{ii}|w|^{p^*-2}w, \qquad w\in D^{1,p}_0(\o),
	\end{equation}
The nontrivial solutions of \eqref{eq:singleequation} are the critical points of the restriction of the functional $J_i:D_0^{1,p}(\o)\to\r$, given by
	$$J_i(w):=\frac{1}{p}\|w\|_p^p -\frac{\beta_{ii}}{p^*}|w|^{p^*}_{p^*},$$
to the Nehari manifold 
	\begin{align*}
		M_i(\o):=\{w\in D^{1,p}_0(\o):w\neq 0 \text{ \ and \ } \|w\|_p^p =\beta_{ii}|w|^{p^*}_{p^*}\}.
	\end{align*}
	Using the invariance under dilations, one easily sees that
	\begin{equation}\label{eq:m_i}
		m_i:=\inf_{w\in M_i(\o)}J_i(w)=\frac{1}{N}\beta_{ii}^{-\frac{N-p}{p}}S_p^\frac{N}{p}.
	\end{equation}
Thus, $m_i$ does not depend on $\o$. 

It is well known that $m_i$ is not attained if $\rn\smallsetminus\o$ has nonempty interior. A stronger result is true for the system.
	
	\begin{theorem}\label{thm:o_nonexistence}
		For any domain $\o$ in $\rn$,
		$$\mu(\o)=\sum_{i=1}^\ell m_i,$$
		and $\mu(\o)$ is not attained.
	\end{theorem}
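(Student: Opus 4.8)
The plan is to prove the two inequalities $\mu(\o)\le\sum_{i=1}^\ell m_i$ and $\mu(\o)\ge\sum_{i=1}^\ell m_i$ separately, and then to derive nonattainment from an analysis of what equality in the lower bound would force.

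For the upper bound, I would take, for each $i$, a minimizing sequence $(w_{i,k})_k\subset M_i(\o)$ for $J_i$ with supports that can be made pairwise disjoint. Concretely, since $m_i$ is attained by the Aubin--Talenti instanton on all of $\rn$ (and is translation/dilation invariant), one can pick approximate minimizers supported in small balls $B(\xi_{i,k},r_k)\subset\o$, choosing the centers $\xi_{i,k}$ far apart so that the $\ell$ supports are disjoint. Setting $\bf u_k:=(w_{1,k},\dots,w_{\ell,k})$, the coupling terms $\int_{\rn}|w_{j,k}|^{p^*/2}|w_{i,k}|^{p^*/2}$ vanish identically for $i\ne j$ (disjoint supports), so $\bf u_k\in\cM(\o)$ automatically and $\cJ(\bf u_k)=\sum_i J_i(w_{i,k})\to\sum_i m_i$. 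Hence $\mu(\o)\le\sum_i m_i$.

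For the lower bound, I would use the sign of the off-diagonal coefficients. Let $\bf u=(u_1,\dots,u_\ell)\in\cM(\o)$. For each $i$ the equation $\partial_i\cJ(\bf u)u_i=0$ reads $\|u_i\|_p^p=\sum_{j=1}^\ell\beta_{ij}\int|u_j|^{p^*/2}|u_i|^{p^*/2}$; since $\beta_{ij}<0$ for $j\ne i$, this gives $\|u_i\|_p^p\le\beta_{ii}|u_i|_{p^*}^{p^*}$, i.e.\ $\|u_i\|_p^{p}\le\beta_{ii}^{-(N-p)/p}S_p^{N/(p-p^*)}\|u_i\|_p^{p^*}\cdot(\text{Sobolev})$, which after the standard manipulation forces $\|u_i\|_p^p\ge t_i^p$ for the unique $t_i>0$ with $t_i\in M_i(\o)$-scaling, equivalently $\tfrac1N\|u_i\|_p^p\ge m_i$. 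Summing and using $\cJ(\bf u)=\tfrac1N\sum_i\|u_i\|_p^p$ (valid on $\cM(\o)$ because each $\partial_i\cJ(\bf u)u_i=0$) yields $\cJ(\bf u)\ge\sum_i m_i$, hence $\mu(\o)\ge\sum_i m_i$.

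Finally, for nonattainment, suppose $\bf u\in\cM(\o)$ with $\cJ(\bf u)=\sum_i m_i$. Then every inequality above is an equality; in particular $\int|u_j|^{p^*/2}|u_i|^{p^*/2}=0$ for all $i\ne j$ (otherwise the inequality $\|u_i\|_p^p\le\beta_{ii}|u_i|_{p^*}^{p^*}$ would be strict, losing energy), so the supports of the $u_i$ are pairwise disjoint, and each $u_i$ is a nontrivial least energy solution of the single equation $-\Delta_p w=\beta_{ii}|w|^{p^*-2}w$ in $D^{1,p}_0(\o)$, i.e.\ attains $m_i$. It therefore suffices to rule out attainment of $m_i$ when $\rn\smallsetminus\o$ has nonempty interior (the case $\o=\rn$ being excluded because the Aubin--Talenti bubbles have full support, contradicting disjointness of $\ell\ge 2$ supports). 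This is the classical fact that the single critical equation has no least energy solution on a proper subdomain: by the strong maximum principle for the $p$-Laplacian (\cite[Theorem 2.5.1]{ps}) a nonnegative nontrivial solution in $D^{1,p}_0(\o)$ would be strictly positive in $\o$, yet the attained Sobolev infimum is $S_p^{N/p}/N$ independently of $\o$, and transplanting a minimizer on $\o$ to $\rn$ (extending by zero) produces a minimizer for $S_p$ on $\rn$ that vanishes on a nonempty open set, contradicting the strong maximum principle applied to the $\rn$-equation (after replacing $w$ by $|w|$, which is also a minimizer).

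I expect the main obstacle to be the last step: making rigorous that attainment of $m_i$ on a proper subdomain $\o$ is impossible. The cleanest route is the transplantation argument just sketched—extend an $M_i(\o)$-minimizer by zero to get a $D^{1,p}(\rn)$ function realizing $S_p$, pass to $|w|\ge 0$, invoke that Sobolev extremals on $\rn$ are (up to scaling/translation) the strictly positive Talenti bubbles (by \cite{dmms,s,v}), and conclude that $w$ cannot vanish on the nonempty interior of $\rn\smallsetminus\o$. One must be slightly careful that $|w|$ is still a minimizer (true since $\||w|\|_p=\|w\|_p$ and $|\,|w|\,|_{p^*}=|w|_{p^*}$) and that a $D^{1,p}(\rn)$-minimizer is smooth enough in the interior to apply Harnack/strong maximum principle, which follows from standard regularity theory for the $p$-Laplacian.
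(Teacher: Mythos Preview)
Your upper and lower bounds are exactly the paper's: disjointly supported approximate extremals give $\mu(\o)\le\sum_i m_i$, and the inequality $\|u_i\|_p^p\le\beta_{ii}|u_i|_{p^*}^{p^*}$ (from $\beta_{ij}<0$) together with Sobolev gives $\frac1N\|u_i\|_p^p\ge m_i$, hence $\mu(\o)\ge\sum_i m_i$.

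For nonattainment the paper is more direct than you are. Once equality forces each $u_i$ to lie on $M_i(\o)$ with $J_i(u_i)=m_i$ and forces $\int|u_i|^{p^*/2}|u_j|^{p^*/2}=0$ for $i\ne j$, the paper simply replaces $u_i$ by $|u_i|$ (still a minimizer on $M_i(\o)$, hence a nonnegative weak solution of the single equation in $\o$), invokes $C^{1,\alpha}_{\mathrm{loc}}$ regularity, and applies the strong maximum principle for the $p$-Laplacian \emph{inside $\o$} to get $u_i>0$ throughout $\o$; this contradicts $u_iu_j=0$ a.e.\ immediately, with no case analysis and no appeal to the Talenti classification \cite{dmms,s,v}. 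Your route---extend by zero to $\rn$, identify $|u_i|$ as a Talenti bubble, and derive a contradiction---also works, but your case split (``$\o=\rn$'' versus ``$\rn\smallsetminus\o$ has nonempty interior'') is not exhaustive: domains such as $\rn\smallsetminus\{0\}$ fall into neither case. The fix is that your full-support argument from the $\o=\rn$ case already covers every domain once you extend by zero, since the extension is still a minimizer on $M_i(\rn)$ and hence strictly positive, which already contradicts $u_iu_j=0$ a.e.\ on the open set $\o$; the separate discussion of nonattainment of $m_i$ on proper subdomains is then unnecessary. In short, your argument is salvageable but the paper's local use of the maximum principle is cleaner and avoids both the case split and the classification result.
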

	
	\begin{proof}
Let $\bf u=(u_1,\dots, u_{\ell})\in \cM(\o)$. Then $u_i\neq 0$ and
		\[
		\|u_i\|_p^p=\beta_{ii}|u_i|_{p^*}^{p^*}+\sum_{\overset{j=1}{j\neq i}}^{\ell}\beta_{ij}\irn|u_j|^{\frac{p^*}{2}}|u_i|^{\frac{p^*}{2}}\leq \beta_{ii}|u_i|_{p^*}^{p^*}.
		\]
Setting $t_i:=\Big(\frac{\|u_i\|_p^p}{\beta_{ii}|u_i|_{p^*}^{p^*}}\Big)^{1/(p^*-p)}$ we have that $t_i\in(0,1]$ and $t_iu_i\in M_i(\o)$. Therefore,
\begin{equation} \label{eq:nonexistence}
\sum_{i=1}^\ell m_i\leq \frac{1}{N}\sum_{i=1}^\ell\|t_iu_i\|_p^p\leq \frac{1}{N}\sum_{i=1}^\ell\|u_i\|_p^p=\cJ(\bf u)\quad\text{for every \ }\bf u\in \cM(\o).
\end{equation}
As a consequence, $\sum_{i=1}^\ell m_i\leq \mu(\Omega).$

To prove the opposite inequality, for each $i=1,\ldots,\ell$ we take a sequence $(\vp_{i,k})$ in $\cC_c^{\infty}(\o)\cap M_i(\o)$ such that $J_i(\vp_{i,k})\to m_i$ as $k\to\infty$. We choose $\ell$ distinct points $x_1,\dots, x_{\ell}\in \o$ and $r>0$ such that $B_r(x_i)\subset \Omega$ and $B_r(x_i)\cap B_r(x_j)=\emptyset$ if $i\neq j$. Since $\vp_{i,k}$ has compact support, we may choose $\eps_{i,k}>0$ such that the support of the function
		\[
		\psi_{i,k}(x)=\eps_{i,k}^{\frac{p-N}{p}}\vp_{i,k}\Big(\frac{x-x_i}{\eps_{i,k}}\Big)
		\] 
is contained in $B_r(x_i)$ for all $k\in\n$.	Since $\|\psi_{i,k}\|_p=\|\vp_{i,k}\|_p$ and $|\psi_{i,k}|_{p^*}=|\vp_{i,k}|_{p^*}$, we have that $\psi_{i,k}\in M_i(\o)$ and, since $\psi_{i,k}$ and $\psi_{j,k}$ have disjoint supports whenever $i\neq j$, the function  $\bf \psi =(\psi_{1,k},\ldots,\psi_{\ell,k})$ belongs to $\cM(\o)$. Thus,
		\begin{align*}
\mu(\o)\leq \cJ(\bf \psi)= \frac{1}{N}\sum_{i=1}^{\ell}\|\psi_{i,k}\|_p^p= \frac{1}{N}\sum_{i=1}^{\ell}\|\vp_{i,k}\|_p^p=\sum_{i=1}^{\ell}J_i(\vp_{i,k}).
		\end{align*}
Passing to the limit we get $\mu(\o)\leq\sum_{i=1}^\ell m_i.$ This proves that
$$\mu(\o)=\sum_{i=1}^\ell m_i.$$

Finally, we show that $\mu(\o)$ is not attained. Arguing by contradiction, assume there exists $\bf u=(u_1,\dots, u_{\ell})\in \cM(\o)$ such that $\mu(\o)=\cJ(\bf u)$. Then, it follows from \eqref{eq:nonexistence} that
\begin{equation} \label{eq:identity}
\sum_{i=1}^\ell m_i=\frac{1}{N}\sum_{i=1}^\ell\|t_iu_i\|_p^p=\frac{1}{N}\sum_{i=1}^\ell\|u_i\|_p^p,
\end{equation}
where $t_i\in(0,1]$ is such that $t_iu_i\in M_i(\o)$. This identity implies that $t_i=1$. Hence, $u_i\in M_i(\o)$ and, as a consequence, $m_i\leq J_i(u_i)=\frac{1}{N}\|u_i\|_p^p$ for all $i=1,\ldots,\ell$. These inequalities, together with \eqref{eq:identity}, yield $m_i=\frac{1}{N}\|u_i\|_p^p$ for all $i=1,\ldots,\ell$, i.e., $u_i$ is a least energy solution of \eqref{eq:singleequation}. Since $|u_i|$ is also a least energy solution, we may assume that $u_i\geq 0$. On the other hand, since $\bf u\in \cM(\o)$, $u_i\in M_i(\o)$ and $\beta_{ij}<0$, we get that
\begin{equation*}
\irn|u_j|^{\frac{p^*}{2}}|u_i|^{\frac{p^*}{2}}=0\quad\text{if \ }j\neq i.
\end{equation*}
This implies that $u_iu_j=0$ a.e. in $\o$. Since $\ell\geq 2$ there exists $j\neq i$ and, since $u_j\neq 0$, we have that $u_i=0$ in a set of positive measure. This contradicts the maximum principle \cite[Theorem 2.5.1]{ps}.
	\end{proof}
	\smallskip
	
\begin{proof}[Proof of Theorem \ref{thm:nonexistence}]
Apply Theorem \ref{thm:o_nonexistence} to $\o=\rn$.
\end{proof}

	\section{The symmetric variational setting}
	\label{sec:pinwheel}
	
Next we consider the system
	\begin{equation}\label{eq:system}
		\begin{cases}
			-\Delta_p u_i = |u_i|^{p^*-2}u_i+\sum\limits_{\substack{j=1\\j\neq i}}^\ell\beta|u_j|^{\frac{p^*}{2}}|u_i|^{\frac{p^*}{2}-2}u_i, \\
			u_i\in D^{1,p}_0(\o),\qquad i=1,\ldots,\ell,
		\end{cases}
	\end{equation}
where $\o$ is a domain in $\rn$, $N\geq 4$, $\frac{2N}{N+2}<p<N$, $\beta<0$.
	
	We describe the variational setting for solutions that satisfy $(P_1)$ and $(P_2)$.
	
	First, we consider the group $G:=\{g\in\cc:|g|=1\}$ of unit complex numbers acting on $\rn\equiv \cc^2\times \r^{N-4}$ by
	\begin{equation}\label{eq:groupaction}
		gx:=(gz_1,\bar{g}z_2,y) \qquad \text{for every } g\in G \text{ and } x=(z_1,z_2,y)\in \cc^2\times \r^{N-4},
	\end{equation}
where $\bar{g}$ is the complex conjugate of $g$, and we assume that $\o$ is $G$-invariant (i.e., $gx\in\o$ for all $g\in G, \ x\in\o$). For each $g\in G$ and $u\in D^{1,p}_0(\o)$ we define $gu\in D^{1,p}_0(\o)$ by $(gu)(x):=u(g^{-1}x)$. Then the map $\bf u\mapsto g\bf u:=(gu_1,\dots,gu_{\ell})$ is a linear isometry of $(D^{1,p}_0(\o))^{\ell}$ and the functional $\cJ$ is $G$-invariant, i.e., $\cJ(g\bf u)=\cJ(\bf u)$ for every $g\in G, \ \bf u\in (D^{1,p}_0(\o))^{\ell}$. The $G$-fixed point space of $(D_0^{1,p}(\o))^{\ell}$ is the subspace
	\begin{align*}
		\mathcal{D}(\o):= & \{\bf u \in (D^{1,p}_0(\o))^{\ell}:g\bf u=\bf u \text{ for all }g\in G\}\\
		= & \{\bf u \in (D^{1,p}_0(\o))^{\ell}:u_j\text{ is } G\text{-invariant for each } j=1,\dots, \ell\}.
	\end{align*}
Since $(D_0^{1,p}(\o))^{\ell}$ is reflexive and strictly convex, by the principle of symmetric criticality for Banach spaces \cite[Theorem 2.2]{ko}, the critical points of the restriction of $\cJ$ to $\mathcal{D}(\o)$ are the critical points of $\cJ$ that satisfy $(P_1)$.
	
	Next, we define an action of $\zl:=\{0,1,\ldots,\ell-1\}$, the additive group of integers modulo $\ell$, on the space $\mathcal{D}(\o)$ as follows: Let $\tau :\cc^2\to\cc^2$ be given by $\tau(z_1,z_2)=(-\overline{z}_1,\overline{z}_2)$ and, for each $j\in\zl$, define
	\begin{equation*}
		\vr_\ell^jx:=\left(\Big(\cos\frac{\pi j}{\ell}\Big)z+\Big(\sin\frac{\pi j}{\ell}\Big)\tau z,\,y\right),\qquad\text{where \ }x=(z,y)\in\cc^2\times\r^{N-4}.
	\end{equation*}
	Since $\tau z$ is orthogonal to $z$ and $|\tau z|=|z|$ we have that $\vr_\ell^j\in O(N)$. Note that $\vr_\ell^k\vr_\ell^j=\vr_\ell^{k+j}$. Furthermore, as $\tau g=g\tau$ for every $g\in G$, we have that $\vr_\ell^jg=g\vr_\ell^j$ for every $g\in G$ and $j\in\zl$. 
	
	\emph{From now on we assume that $\o$ is invariant under the action of the subgroup of $O(N)$ generated by $G\cup\{\vr_\ell^1\}$.} We denote by $\sigma^j:\{1,\ldots,\ell\}\to\{1,\ldots,\ell\}$ the permutation $\sigma^j(m):=m+j \mod\ell$, \ $j\in\zl$. As in \cite[Proposition 2.1]{cfs}, it is readily seen that the function $\vr_\ell^j:\mathcal{D}(\o)\to\mathcal{D}(\o)$ given by
	$$\vr_\ell^j\bf u(x):=(u_{\sigma^j(1)}(\vr_\ell^{-j}x),\ldots,u_{\sigma^j(\ell)}(\vr_\ell^{-j}x)),\quad\text{where \ }\bf u=(u_1,\ldots,u_\ell),$$
	is a well-defined linear isometry, and that $j\mapsto\vr_\ell^j$ is a well-defined homomorphism from $\zl$ into the group of linear isometries of $\mathcal{D}(\o)$. The functional $\cJ|_{\cD(\o)}$ is, clearly, $\zl$-invariant. So by the principle of symmetric criticality \cite[Theorem 2.2]{ko}, the critical points of the restriction of $\cJ$ to the $\zl$-fixed point space of $\cD(\o)$,
	\begin{align*}
		\mathscr{D}(\o) &:=  \cD(\o)^{\zl}=\{u\in\cD(\o):\vr_\ell^j\bf u=\bf u \text{ for all }j\in \zl\}\\
		& = \{\bf u \in D^{1,p}_0(\o):u_j \text{ is } G\text{-invariant and } u_{j+1}=u_j\circ \vr_\ell \text{ for each } j=1,\dots, \ell\},
	\end{align*}
	are critical points of $\cJ|_{\cD(\o)}$ and, hence, of $\cJ$. They are the solutions to the system \eqref{eq:system} that satisfy $(P_1)$ and $(P_2)$. We call them \emph{pinwheel solutions}. 
	
	Abusing notation, we shall write
	$$\cJ:=\cJ|_{\mathscr{D}(\o)}:\mathscr{D}(\o)\to\r.$$
	Note that, if $\bf u \in\mathscr{D}(\o)$ and $\bf u\neq \bf 0$, then every component of $\bf u$ is nontrivial. Hence, the fully nontrivial critical points of $\cJ$ belong to the Nehari manifold
	\begin{align*}
		\cN(\o):= \{\bf u \in \mathscr{D}(\o):\bf u\neq \bf 0 \text{ and } \cJ'(\bf u)\bf u\neq 0\}. 
	\end{align*}
Note also that
	$$\cJ'(\bf u)\bf v=\sum_{i=1}^\ell\partial_i \cJ (\bf u)v_i= \ell \partial_j\cJ(\bf u)v_j \quad \text{ for all }\bf u, \bf v\in\mathscr{D}(\o)\text{ and } j=1,\dots, \ell. $$
Therefore $\cN(\o)=\cM(\o)\cap \mathscr{D}(\o)$, where $\cM(\o)$ is as in \eqref{M(o)}. Set
	\begin{equation*}
		c(\o):= \inf_{\bf u\in \cN(\o)}\cJ(\bf u).
	\end{equation*}
Then $c(\o)\geq\mu(\o)$ and it follows from Theorem \ref{thm:o_nonexistence} and equation \eqref{eq:m_i} that
\begin{equation}\label{eq:Sp}
c(\o)\geq \frac{\ell}{N}S_p^\frac{N}{p}.
\end{equation}	

	\begin{lemma}\label{lem:nehari}
 $\cN(\o)\neq \emptyset$, and it is a closed $\cC^1$-submanifold of $\mathscr{D}(\o)$ and a natural constraint for $\cJ$. 
	\end{lemma}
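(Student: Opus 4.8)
The plan is to verify the three assertions in the order stated. First I would show $\cN(\o)\neq\emptyset$. Since the permitted domains $\o$ are $(G\cup\{\vr_\ell^1\})$-invariant, they are nonempty open sets, so I can pick a single function $\varphi\in\cC_c^\infty(\o)$ with $\varphi\not\equiv 0$; averaging over the compact group generated by $G$ does not immediately land in $\mathscr{D}(\o)$ because of the $\zl$-action permuting components, so instead I would take a $G$-invariant $w\in\cC_c^\infty(\o)$ (obtained by averaging a bump function over $G$, possibly after shrinking its support so the $G$-orbit stays in $\o$), and set $u_j:=w\circ\vr_\ell^{-(j-1)}$ for $j=1,\dots,\ell$. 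By construction $\bf u:=(u_1,\dots,u_\ell)$ satisfies $(P_1)$ and $(P_2)$, hence $\bf u\in\mathscr{D}(\o)$ and $\bf u\neq\bf 0$. Now I would rescale: for $t>0$ put $\bf u_t:=(tu_1,\dots,tu_\ell)$, which again lies in $\mathscr{D}(\o)$, and examine
$$
\cJ'(\bf u_t)\bf u_t=\ell\Big(t^p\|u_1\|_p^p-t^{p^*}|u_1|_{p^*}^{p^*}-t^{p^*}\beta\sum_{j\neq 1}\irn|u_j|^{\frac{p^*}{2}}|u_1|^{\frac{p^*}{2}}\Big).
$$
Since $p^*>p$ and the bracketed coefficient of $t^{p^*}$ is $|u_1|_{p^*}^{p^*}+\beta\sum_{j\neq 1}\irn|u_j|^{\frac{p^*}{2}}|u_1|^{\frac{p^*}{2}}$, I must ensure this coefficient is positive; because $\beta<0$ and the overlap integrals are between distinct rotated copies of $w$, one can arrange (by choosing $\supp w$ small enough that the $\ell$ rotated supports $\vr_\ell^j(\supp w)$ are pairwise disjoint, which is possible since $\vr_\ell^1$ rotates the $\cc^2$-directions by the nontrivial angle $\pi/\ell$) that all mixed integrals vanish. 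Then the coefficient is $|u_1|_{p^*}^{p^*}>0$, and there is a unique $t_*>0$ with $\cJ'(\bf u_{t_*})\bf u_{t_*}=0$; moreover $\cJ'(\bf u_{t_*})\bf u_{t_*}\neq 0$ fails only at that critical $t$, so one checks $\bf u_{t_*}$ actually lands on the Nehari set — more precisely, $\cN(\o)$ as written uses $\cJ'(\bf u)\bf u\neq 0$, but from the identity $\cN(\o)=\cM(\o)\cap\mathscr{D}(\o)$ and \eqref{M(o)} the intended defining relation is $\partial_j\cJ(\bf u)u_j=0$ for all $j$ together with $u_j\neq 0$; the rescaled $\bf u_{t_*}$ satisfies exactly this.

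Second, I would show $\cN(\o)$ is a closed $\cC^1$-submanifold. Define $\Phi:\mathscr{D}(\o)\to\r$ by $\Phi(\bf u):=\cJ'(\bf u)\bf u=\ell\,\partial_1\cJ(\bf u)u_1=\ell\big(\|u_1\|_p^p-|u_1|_{p^*}^{p^*}-\beta\sum_{j\neq 1}\irn|u_j|^{\frac{p^*}{2}}|u_1|^{\frac{p^*}{2}}\big)$, which is $\cC^1$ because $p^*>2$. Then $\cN(\o)=\Phi^{-1}(0)\setminus\{\bf 0\}$; using the symmetry-reduced formula one can replace the apparent issue at $\bf 0$ and on any element with some $u_j\equiv 0$ by noting that on $\mathscr{D}(\o)$ all components vanish simultaneously, so $\cN(\o)=\Phi^{-1}(0)\cap\{\bf u:\bf u\neq\bf 0\}$ is relatively closed in $\mathscr{D}(\o)\setminus\{\bf 0\}$, and in fact closed in $\mathscr{D}(\o)$ because \eqref{eq:Sp} forces $\cJ\geq\frac{\ell}{N}S_p^{N/p}>0$ on $\cN(\o)$, bounding $\|\bf u\|_p$ away from $0$. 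It remains to check $\Phi'(\bf u)\neq 0$ on $\cN(\o)$: compute
$$
\Phi'(\bf u)\bf u=\ell\Big(p\|u_1\|_p^p-p^*|u_1|_{p^*}^{p^*}-p^*\beta\sum_{j\neq 1}\irn|u_j|^{\frac{p^*}{2}}|u_1|^{\frac{p^*}{2}}\Big).
$$
On $\cN(\o)$ we have $\|u_1\|_p^p=|u_1|_{p^*}^{p^*}+\beta\sum_{j\neq 1}\irn|u_j|^{\frac{p^*}{2}}|u_1|^{\frac{p^*}{2}}$, so $\Phi'(\bf u)\bf u=\ell(p-p^*)\|u_1\|_p^p<0$ since $p<p^*$ and $u_1\neq 0$. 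Hence $\Phi'(\bf u)\neq 0$, $0$ is a regular value, and $\cN(\o)$ is a $\cC^1$-submanifold of codimension one.

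Third, for the natural constraint property I would argue in the standard way: if $\bf u\in\cN(\o)$ is a critical point of $\cJ|_{\cN(\o)}$, then by the Lagrange multiplier rule there is $\lambda\in\r$ with $\cJ'(\bf u)=\lambda\Phi'(\bf u)$ in $\mathscr{D}(\o)^*$; applying both sides to $\bf u$ and using $\cJ'(\bf u)\bf u=\Phi(\bf u)=0$ gives $0=\lambda\,\Phi'(\bf u)\bf u$, and since $\Phi'(\bf u)\bf u\neq 0$ we conclude $\lambda=0$, whence $\cJ'(\bf u)=0$ in $\mathscr{D}(\o)^*$; by the two applications of the principle of symmetric criticality recalled just before the lemma, $\bf u$ is then a critical point of the full functional $\cJ$ on $(D_0^{1,p}(\o))^\ell$, i.e.\ a pinwheel solution of \eqref{eq:system}. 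The main obstacle, and the only place requiring genuine care rather than bookkeeping, is the nonemptiness step: because $\beta<0$ the coupling terms have the wrong sign to guarantee a priori that the $t$-fiber meets the Nehari set, so one really does need the disjoint-support construction above (or, alternatively, a small-$\beta$ perturbative argument, but the disjoint-support trick works for all $\beta<0$ and all admissible $\o$). Everything else is the routine Nehari-manifold fibering argument adapted to the symmetric space $\mathscr{D}(\o)$, where the single scalar equation $\partial_1\cJ(\bf u)u_1=0$ encodes all $\ell$ Nehari conditions thanks to $(P_1)$–$(P_2)$.
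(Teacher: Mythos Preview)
Your argument is correct and is precisely the standard Nehari-manifold verification that the paper defers to \cite[Lemma 2.3]{cfs}: disjoint-support construction for nonemptiness, the regular-value computation $\Phi'(\bf u)\bf u=\ell(p-p^*)\|u_1\|_p^p<0$ for the $\cC^1$-submanifold property, and the Lagrange-multiplier step for the natural constraint. You also correctly flag the typo in the displayed definition of $\cN(\o)$ (the condition should read $\cJ'(\bf u)\bf u=0$, as confirmed by the identity $\cN(\o)=\cM(\o)\cap\mathscr{D}(\o)$).
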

	
	\begin{proof}
The proof is easy; see \cite[Lemma 2.3]{cfs}.
	\end{proof}
	
	The proof of the following lemma is identical to that of \cite[Proposition 2.5$(i)$]{cfs}. We give the details for the benefit of the reader.
	
	\begin{lemma} \label{lem:same infimum}
		If $\o\cap(\{0\}\times\r^{N-4})\neq\emptyset$, then  $c(\o)=c(\rn)$.
	\end{lemma}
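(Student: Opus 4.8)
The plan is to prove both inequalities $c(\o)\le c(\rn)$ and $c(\rn)\le c(\o)$ by transplanting pinwheel configurations between $\o$ and $\rn$ using the dilation invariance of the critical problem together with the hypothesis that $\o$ meets the fixed-point subspace $\{0\}\times\r^{N-4}$. Observe first that the whole symmetric structure ($G$-invariance and the $\zl$-action via $\vr_\ell$) commutes with dilations centered at any point of $\{0\}\times\r^{N-4}$: indeed $G$ and each $\vr_\ell^j$ fix that subspace and act linearly, so if $x_0\in\{0\}\times\r^{N-4}$ then $g(x_0+\lambda(x-x_0))=x_0+\lambda(gx-x_0)$ and similarly for $\vr_\ell^j$. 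Hence, given $\bf u\in\mathscr{D}(\o')$ for a $G$- and $\vr_\ell$-invariant domain $\o'$, the rescaled tuple $\bf u_\lambda(x):=\lambda^{(N-p)/p}\,\bf u\big(x_0+\lambda(x-x_0)\big)$ again lies in $\mathscr{D}(\lambda^{-1}(\o'-x_0)+x_0)$, belongs to the corresponding Nehari set whenever $\bf u$ does, and has the same energy $\cJ(\bf u_\lambda)=\cJ(\bf u)$.

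For $c(\rn)\le c(\o)$: pick a point $x_0\in\o\cap(\{0\}\times\r^{N-4})$. Given $\bf u\in\cN(\o)$ with $\cJ(\bf u)$ close to $c(\o)$, extend each component by zero to all of $\rn$; the extension still satisfies $(P_1)$–$(P_2)$ because $G$ and $\vr_\ell$ fix $x_0$ and map $\o$ to itself, so zero-extension is equivariant, giving $\bf u\in\cN(\rn)$ with the same energy. Therefore $c(\rn)\le\cJ(\bf u)$, and taking the infimum over $\cN(\o)$ yields $c(\rn)\le c(\o)$.

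For $c(\o)\le c(\rn)$: fix $\eps>0$ and take $\bf u\in\cN(\rn)$ with $\cJ(\bf u)<c(\rn)+\eps$. The idea is to concentrate $\bf u$ near $x_0$ so that it (almost) fits inside $\o$. First approximate $\bf u$ in $\mathscr{D}(\rn)$ by a compactly supported symmetric tuple: since $\cC_c^\infty(\rn)^\ell$ is dense and the symmetrization/averaging projections onto $\mathscr{D}(\rn)$ are continuous, one obtains $\bf v\in\mathscr{D}(\rn)$ with compact support and $\cJ(\bf v)$, $\cN$-data close to those of $\bf u$; after a small rescaling on the Nehari manifold (multiplying by the unique $t>0$ with $t\bf v\in\cN(\rn)$, which is close to $1$) we may assume $\bf v\in\cN(\rn)$ with $\cJ(\bf v)<c(\rn)+2\eps$. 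Now apply the dilation $\bf v_\lambda(x):=\lambda^{(N-p)/p}\bf v(x_0+\lambda(x-x_0))$ with $\lambda$ large; since $\bf v$ has compact support and $x_0\in\o$ which is open, $\supp\bf v_\lambda\subset B_{r}(x_0)\subset\o$ for $\lambda$ large, so $\bf v_\lambda\in\cN(\o)$ with $\cJ(\bf v_\lambda)=\cJ(\bf v)<c(\rn)+2\eps$. Hence $c(\o)\le c(\rn)+2\eps$ for every $\eps>0$, giving $c(\o)\le c(\rn)$. Combining the two inequalities gives $c(\o)=c(\rn)$.

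The main obstacle I anticipate is the density/approximation step in the second half: I must check that truncating or mollifying a symmetric tuple can be done while staying inside $\mathscr{D}(\rn)$ — i.e., preserving both $G$-invariance and the coupling $u_{j+1}=u_j\circ\vr_\ell$ — and while keeping the energy and the Nehari normalization under control. The clean way is to avoid truncation of a given minimizer altogether and instead build $\bf v$ directly: start from a compactly supported $\vp\in\cC_c^\infty(\rn)$ that is $G$-invariant and supported near $x_0$ (such $\vp$ exist since $x_0$ is fixed by $G$ and by all $\vr_\ell^j$, so a small ball around $x_0$ can be taken $G$- and $\vr_\ell$-invariant), set $v_j:=\vp\circ\vr_\ell^{-(j-1)}$, choose the supports of the $\ell$ pieces pairwise disjoint as in the proof of Theorem \ref{thm:o_nonexistence}, and then rescale so the tuple lies in $\cN(\rn)$ with energy $\le\frac{\ell}{N}S_p^{N/p}+\eps$ (possible because the single-equation ground-state energy $m_i$ with $\beta_{ii}=1$ equals $\frac1N S_p^{N/p}$ and, with disjoint supports, the coupling terms vanish). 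Transplanting this directly into $\o$ by a large dilation centered at $x_0$ then shows $c(\o)\le\frac{\ell}{N}S_p^{N/p}+\eps$; combined with the lower bound \eqref{eq:Sp} and the already-established $c(\rn)\le c(\o)$, this pins down $c(\o)=c(\rn)=\frac{\ell}{N}S_p^{N/p}$, which is even sharper than claimed. Either route works; I would present the direct construction to sidestep the equivariant-truncation technicality.
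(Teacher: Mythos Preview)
Your inequality $c(\rn)\le c(\o)$ via zero-extension is correct and is exactly what the paper does. Your \emph{first} approach to $c(\o)\le c(\rn)$ --- approximate a near-minimizer in $\cN(\rn)$ by a compactly supported pinwheel tuple and then dilate it into $\o$ around a point $\xi\in\o\cap(\{0\}\times\r^{N-4})$ --- is also precisely the paper's argument. The equivariant-truncation issue you worry about is harmless: a radially symmetric cutoff $\chi(|x|)$ and a radially symmetric mollifier are invariant under every element of $O(N)$, hence under $G$ and each $\vr_\ell^j$, so multiplying each component by $\chi$ and mollifying preserves both $G$-invariance and the relation $u_{j+1}=u_j\circ\vr_\ell$; a small Nehari renormalization then finishes. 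The paper simply asserts the existence of a minimizing sequence $(\bf\vp_k)\subset\cN(\rn)$ with $\vp_{j,k}\in\cC_c^\infty(\rn)$, which amounts to the same thing.

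Your \emph{second} ``direct'' route, however, is wrong and you should discard it. The construction is internally inconsistent: you take $\vp$ supported near the fixed point $x_0\in\{0\}\times\r^{N-4}$ and then ask that the translates $v_j=\vp\circ\vr_\ell^{-(j-1)}$ have pairwise disjoint supports. But $\vr_\ell$ fixes $x_0$, so every $v_j$ is supported in the \emph{same} small ball around $x_0$; disjointness is impossible there. If instead you push $\supp\vp$ away from the fixed set $\{0\}\times\r^{N-4}$ to force disjointness, a $G$-invariant $\vp$ can no longer approximate the sharp constant $S_p$ (the only $G$-invariant almost-extremals for $S_p$ are bubbles centered on that fixed set, and those are also $\vr_\ell$-invariant). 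More decisively, your advertised conclusion $c(\rn)=\frac{\ell}{N}S_p^{N/p}=\mu(\rn)$ is false: by Theorem~\ref{thm:existence} the infimum $c(\rn)$ is attained at some $\bf u\in\cN(\rn)\subset\cM(\rn)$, so $c(\rn)=\mu(\rn)$ would mean $\mu(\rn)$ is attained, contradicting Theorem~\ref{thm:o_nonexistence}. Hence $c(\rn)>\frac{\ell}{N}S_p^{N/p}$ strictly, and no disjoint-support pinwheel test configuration can reach $c(\rn)$. Stick with the first approach.
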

	
	\begin{proof}
		As $\cN(\o)\subset\cN(\rn)$ via trivial extension, we have that $c(\o)\geq c(\rn)$. To prove the opposite inequality we chose a sequence $\bf\vp_k=(\vp_{1,k},\ldots,\vp_{\ell,k})\in\cN(\rn)$ such that $\vp_{j,k}\in\cC^\infty_c(\rn)$ and $\cJ(\bf\vp_k)\to c(\rn)$. Let $\xi\in\o\cap(\{0\}\times\r^{N-4})$ and $r>0$ be such that $B_r(\xi)\subset\o$, where $B_r(\xi)$ is the ball centered at $\xi$ of radius $r$.  Fix $\eps_k>0$ such that $\eps_kz\in B_r(0)$ for every $z\in\supp(\vp_{1,k})$. Since $\vp_{j+1,k}=\vp_{1,k}\circ\vr_\ell^j$ for every $j=0,\ldots,\ell-1$, we have that $\eps_kx\in B_r(0)$ for every $x\in\supp(\vp_{j,k})$ and $j=1,\ldots,\ell$. Define
		$$\widetilde{\vp}_{j,k}(x):=\eps_k^\frac{p-N}{p}\vp_{j,k}\Big(\frac{x-\xi}{\eps_k}\Big).$$
		Then $\widetilde{\vp}_{j,k}\in\cC^\infty_c(\o)$ and, since $g\xi=\xi$ for every $g\in G$ and $\vr_\ell^j\xi=\xi$, we have that $\widetilde{\bf\vp}_k=(\widetilde{\vp}_{1,k},\ldots,\widetilde{\vp}_{\ell,k})\in\mathscr{D}(\o)$. Furthermore, as
		$$\|\widetilde{\vp}_{j,k}\|^p_p=\|\vp_{j,k}\|^p_p\qquad\text{and}\qquad\io|\widetilde{\vp}_{j,k}|^\frac{p^*}{2}|\widetilde{\vp}_{i,k}|^\frac{p^*}{2}=\irn|\vp_{j,k}|^\frac{p^*}{2}|\vp_{i,k}|^\frac{p^*}{2},$$
		we have that $\widetilde{\bf\vp}_k\in\cN(\o)$ and $\cJ(\widetilde{\bf\vp}_k)=\cJ(\bf\vp_k)\to c(\rn)$. This shows that $c(\o)\leq c(\rn)$ and completes the proof.
	\end{proof}
	
	\begin{remark}
		\emph{When $p=2$ more can be said. Namely, as shown in \cite[Proposition 2.5$(ii)$]{cfs}, if $\o\cap(\{0\}\times\r^{N-4})\neq\emptyset$ and $\rn\smallsetminus\o$ has nonempty interior, then the system \eqref{eq:system} does not have a least energy pinwheel solution. This follows from the unique continuation property for systems proved in \cite{chs}, which is not available for $p\neq 2$.}
	\end{remark}
	
	\section{Existence of a pinwheel solution}
	\label{sec:existence}
	
	Our aim is to analyse the behavior of minimizing sequences for $\cJ$ on $\cN(\o)$ when $\o$ is the unit ball. The existence of a pinwheel solution to the system \eqref{eq:rn_symmetric_system} will follow from this analysis. We need the following lemma.
	
	\begin{lemma}\label{lem:G}
		Let $G$ act on $\rn\equiv\cc^2\times\r^{N-4}$ as in \eqref{eq:G}. Then any given sequences $(\eps_k)$ in $(0,\infty)$ and $(\zeta_{k})$ in $\rn$ contain subsequences that satisfy one of the following statements:
		\begin{enumerate}
			\item[$(i)$] either there exist $\eta_k\in\{0\}\times\r^{N-4}$ and $C_1>0$ such that $\eps_{k}^{-1}|\zeta_{k}-\eta_k|<C_1$ for all $k\in\n$,
			\item[$(ii)$] or, for each $m\in\n$, there exist $g_1,\ldots,g_m\in G$ such that $\eps_{k}^{-1}|g_i\zeta_{k}-g_j\zeta_{k}|\rightarrow\infty$ for any $i\neq j$.
		\end{enumerate}
	\end{lemma}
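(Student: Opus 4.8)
The plan is to analyze the behavior of the scaled orbit $\{\eps_k^{-1}g\zeta_k : g\in G\}$ in $\rn$, distinguishing according to whether the $\cc^2$-component of $\zeta_k$ stays bounded after scaling or escapes. Write $\zeta_k=(w_k,y_k)\in\cc^2\times\r^{N-4}$ and set $r_k:=|w_k|$ (the Euclidean norm of the $\cc^2$-part). The key observation is that the $G$-action fixes $\{0\}\times\r^{N-4}$ pointwise and acts on $\cc^2$ by $g\cdot(z_1,z_2)=(gz_1,\bar g z_2)$, which is a free action on the set $\{w\in\cc^2 : z_1\neq 0 \text{ and } z_2\neq 0\}$ and, more importantly, satisfies a quantitative separation estimate: for $g,h\in G$ with $g\neq h$ one has $|g\cdot w - h\cdot w|^2 = |(g-h)z_1|^2 + |(\bar g-\bar h)z_2|^2 = |g-h|^2(|z_1|^2+|z_2|^2) = |g-h|^2 r^2$, where $r=|w|$. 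This identity is the engine of the whole argument.

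First I would pass to a subsequence so that the scaled radius $\rho_k:=\eps_k^{-1} r_k$ either converges to a finite limit or tends to $+\infty$. In the first case, $\rho_k$ is bounded by some constant, so taking $\eta_k:=(0,y_k)\in\{0\}\times\r^{N-4}$ gives $\eps_k^{-1}|\zeta_k-\eta_k| = \eps_k^{-1}|w_k| = \rho_k < C_1$ for all $k$, which is exactly alternative $(i)$. In the second case, $\rho_k\to\infty$. Now fix $m\in\n$; I want to choose $g_1,\dots,g_m\in G$ once and for all (independent of $k$) so that conclusion $(ii)$ holds. Pick any $m$ distinct unit complex numbers $g_1,\dots,g_m$ and let $\delta:=\min_{i\neq j}|g_i-g_j|>0$. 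By the separation identity applied with $w=w_k$, for $i\neq j$ we get
\begin{equation*}
\eps_k^{-1}|g_i\zeta_k - g_j\zeta_k| = \eps_k^{-1}|g_i\cdot w_k - g_j\cdot w_k| = |g_i-g_j|\,\eps_k^{-1} r_k \geq \delta\,\rho_k \longrightarrow \infty,
\end{equation*}
which is precisely alternative $(ii)$. (Here I used that the $\r^{N-4}$-component is unaffected by $G$, so it cancels in the difference $g_i\zeta_k-g_j\zeta_k$.)

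The only subtlety is that alternatives $(i)$ and $(ii)$ are not a priori exhaustive for an arbitrary sequence, but after the single dichotomy ``$\rho_k$ bounded or $\rho_k\to\infty$'' (achieved by passing to one subsequence) they are — so the statement ``contain subsequences that satisfy one of the following'' is exactly what we prove. I do not anticipate a genuine obstacle here; the main point to get right is the bookkeeping: the $g_i$ in $(ii)$ must be chosen uniformly in $k$, and one should note that the construction in case $(ii)$ in fact works for all $m$ simultaneously with a single choice of an infinite sequence $g_1,g_2,\dots\in G$ of distinct unit complex numbers, since for any fixed $m$ the minimum $\min_{1\le i<j\le m}|g_i-g_j|$ is a positive constant. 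I would also remark, for use later in the paper, that in case $(ii)$ the scaled translates $\eps_k^{-1}g_i\zeta_k$ pairwise diverge, which is what is needed to run a concentration-compactness / profile-decomposition argument with $\ell$ or more bubbles.
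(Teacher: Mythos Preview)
Your argument is correct. The paper itself does not give a proof of this lemma; it simply cites \cite[Lemma~3.1]{cfs}. Your direct computation---writing $\zeta_k=(w_k,y_k)\in\cc^2\times\r^{N-4}$, observing that $|g\cdot w-h\cdot w|=|g-h|\,|w|$ for all $g,h\in G$, and then splitting into the two cases $\eps_k^{-1}|w_k|$ bounded (giving $(i)$ with $\eta_k=(0,y_k)$) versus $\eps_k^{-1}|w_k|\to\infty$ (giving $(ii)$ with any $m$ distinct $g_1,\dots,g_m\in G$)---is exactly the natural proof and is presumably what is done in the cited reference.

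One cosmetic remark: your parenthetical that the action is free on $\{w\in\cc^2: z_1\neq 0\text{ and }z_2\neq 0\}$ understates the truth (it is free on $\cc^2\smallsetminus\{0\}$), but since you only use the separation identity $|g\cdot w-h\cdot w|=|g-h|\,|w|$, which you establish independently and which holds for all $w\in\cc^2$, this does not affect the proof.
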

	\begin{proof}
		See \cite[Lemma 3.1]{cfs}.
	\end{proof}	
	
	\begin{theorem} \label{thm:minimizing}
		Let $\b$ be the unit ball centered at the origin in $\rn$ and $(\bf u_k)$ be a sequence in $\cN(\b)$ such that $\cJ(\bf u_k)\to c(\b)$. Then, after passing to a subsequence, one of the following three statements holds true:
		\begin{itemize}
			\item[$(I)$] $(u_k)$ converges strongly in $\mathscr{D}(\b)$ to a least energy pinwheel solution of the system \eqref{eq:system} in the ball $\b$.
			\item[$(II)$] There exist sequences $(\eps_k)$ in $(0,\infty)$ and $(\xi_k)$ in $\partial\b\cap(\{0\}\times\r^{N-4})$ such that $\eps_k\to 0$ and $\xi_k\to\xi$, and a least energy pinwheel solution $\bf w=(w_1,\ldots,w_\ell)$ to the system \eqref{eq:system} in the half-space
			$$\mathbb{H}:=\{x\in\rn:\xi\cdot x<0\}$$
			such that \ $\lim_{k\to\infty}\|\bf u_k - \widehat{\bf w}_k\|=0$, \ where $\widehat{\bf w}_k=(\widehat{w}_{1,k},\ldots, \widehat{w}_{\ell,k})$ is given by
			\begin{equation*}
				\widehat{w}_{i,k}(x) =\eps_{k}^{\frac{p-N}{p}}w_i\Big(\frac{x-\xi_k}{\eps_{k}}\Big)\qquad \text{for every \ } k\in\n \text{ \ and \ }i=1,\ldots,\ell.
			\end{equation*}
			\item[$(III)$] There exist sequences $(\eps_k)$ in $(0,\infty)$ and $(\xi_k)$ in $\b\cap(\{0\}\times\r^{N-4})$ and a least energy pinwheel solution $\bf w=(w_1,\ldots,w_\ell)$ to the system \eqref{eq:rn_symmetric_system} in $\rn$, such that
			\begin{align*}
				\eps_k^{-1}\dist(\xi_k,\partial\b)\to\infty,\qquad\text{and}\qquad\lim_{k\to\infty}\|\bf u_k - \widehat{\bf w}_k\|=0,
			\end{align*}
			where $\widehat{\bf w}_k=(\widehat{w}_{1,k},\ldots, \widehat{w}_{\ell,k})$ is defined as above.
		\end{itemize}
	\end{theorem}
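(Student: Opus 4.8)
The plan is to run a concentration–compactness analysis for the minimizing sequence $(\bf u_k)$ on the Nehari manifold $\cN(\b)$, keeping track at every step of the symmetry constraints $(P_1)$, $(P_2)$, which is what forces the three clean alternatives. First I would show that $(\bf u_k)$ is bounded in $\mathscr{D}(\b)$: this is immediate because on $\cN(\b)$ one has $\cJ(\bf u_k)=\frac1N\|\bf u_k\|_p^p$, and the left side converges to $c(\b)<\infty$. By reflexivity, pass to a subsequence with $\bf u_k\rightharpoonup\bf u$ weakly in $\mathscr{D}(\b)$; the functional $\cJ$ and its derivative are weakly sequentially continuous in the relevant sense because $p^*$ is the critical exponent. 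If $\bf u\neq\bf 0$, then (using that every component of a nonzero element of $\mathscr{D}(\b)$ is nontrivial, and a Brezis–Lieb type splitting for both $\|\cdot\|_p^p$ and the coupling integrals $\irn|u_j|^{p^*/2}|u_i|^{p^*/2}$) I would argue that $\bf u\in\cN(\b)$, that $\cJ(\bf u)\le c(\b)$, hence $\cJ(\bf u)=c(\b)$, and that the remainder $\bf u_k-\bf u$ carries zero energy, forcing strong convergence: this is case $(I)$, and $\bf u$ is then a least energy pinwheel solution of \eqref{eq:system} in $\b$ by the principle of symmetric criticality already set up in Section \ref{sec:pinwheel}.

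The substantive case is $\bf u\equiv\bf 0$, i.e. vanishing of the weak limit. Here I would apply a $p$-Laplacian version of P.-L. Lions' concentration–compactness lemma to the measures $|\nabla u_{1,k}|^p$ (equivalently, by $(P_2)$, to any single component): the critical Sobolev inequality together with $\bf u_k\in\cN(\b)$ and $c(\b)\ge\frac{\ell}{N}S_p^{N/p}$ (inequality \eqref{eq:Sp}) rules out pure vanishing and dichotomy, forcing concentration at a single scale. Concretely, I would extract sequences $(\eps_k)$ in $(0,\infty)$ and concentration points $(\zeta_k)$ in $\overline{\b}$ and set $\wt{u}_{i,k}(x):=\eps_k^{(N-p)/p}u_{i,k}(\zeta_k+\eps_k x)$, chosen so that the rescaled first component does not vanish. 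The rescaled domains $\eps_k^{-1}(\b-\zeta_k)$ converge (after a further subsequence) either to all of $\rn$, or to a half-space, depending on the behavior of $\eps_k^{-1}\dist(\zeta_k,\partial\b)$. Now the crucial structural input is Lemma \ref{lem:G}: applied to $(\eps_k)$ and $(\zeta_k)$, it says that either $\zeta_k$ is, up to $O(\eps_k)$, on the axis $\{0\}\times\r^{N-4}$ fixed by $G$ — in which case the $G$-invariance and the pinwheel relation $(P_2)$ survive the rescaling and the limit $\bf w$ is a nontrivial pinwheel solution in $\rn$ (case $(III)$) or in a half-space $\mathbb H$ with $\xi\in\{0\}\times\r^{N-4}$ (case $(II)$) — or else the $G$-orbit of $\zeta_k$ spreads out at scale $\eps_k$ into arbitrarily many well-separated bubbles, each of which contributes at least $\frac\ell N S_p^{N/p}$ to the energy by $(P_2)$, contradicting $\cJ(\bf u_k)\to c(\b)$. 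This dichotomy is exactly what pins the concentration point onto the fixed-point axis and produces the two surviving alternatives; I would also need the elementary fact (from Lemma \ref{lem:same infimum} and its half-space analogue) that the limiting energy of $\bf w$ equals $c(\rn)$, respectively $c(\mathbb H)$, and that $c(\mathbb H)=c(\rn)=c(\b)$, so that $\bf w$ is genuinely a \emph{least energy} pinwheel solution and the convergence $\|\bf u_k-\wh{\bf w}_k\|\to 0$ is strong rather than merely up to a lower-order term.

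The remaining bookkeeping is: verifying that the rescaled sequence $\wt{\bf u}_k$ converges weakly to $\bf w$ and that $\bf w\in\cN(\rn)$ or $\cN(\mathbb H)$ (a local Brezis–Lieb argument, using that the $\dist(\xi_k,\partial\b)/\eps_k$ either stays bounded or diverges); checking that no energy escapes, i.e. $\|\bf u_k-\wh{\bf w}_k\|_p^p=\|\bf u_k\|_p^p-\|\bf w\|_p^p+o(1)\to 0$, which combines the Brezis–Lieb identity with $\cJ(\bf w)\ge c(\b)$ and $\cJ(\bf u_k)\to c(\b)$; and, in case $(II)$, checking that $\eps_k\to 0$ and $\xi_k\to\xi\in\partial\b\cap(\{0\}\times\r^{N-4})$ rather than some degenerate behavior. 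I expect the main obstacle to be the rigorous no-dichotomy/no-vanishing step in the $p$-Laplacian setting together with the careful use of Lemma \ref{lem:G} to exclude the "spreading orbit" scenario: unlike the $p=2$ case one cannot lean on conformal invariance or on a unique continuation principle, so the argument must be driven purely by the energy threshold $\frac\ell N S_p^{N/p}$, the multiplicity of bubbles forced by the group $G$, and the permutation structure $(P_2)$ coupling the $\ell$ components. Once concentration at a single scale on the $G$-axis is established, identifying $\bf w$ as a least energy pinwheel solution and upgrading to strong convergence of $\bf u_k-\wh{\bf w}_k$ is routine.
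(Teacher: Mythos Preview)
Your overall architecture matches the paper's: bound the sequence via $\cJ(\bf u_k)=\frac{1}{N}\|\bf u_k\|_p^p$, pass to a weak limit, and if this limit vanishes run a Levy--Lions rescaling together with Lemma~\ref{lem:G} to pin the concentration center onto the fixed axis $\{0\}\times\r^{N-4}$, then identify the limit problem as $\b$, $\mathbb H$, or $\rn$ and invoke Lemma~\ref{lem:same infimum}. That skeleton is correct.

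There is, however, a genuine missing ingredient: you never upgrade $(\bf u_k)$ to a Palais--Smale sequence. The paper's very first move is to apply Ekeland's variational principle on $\cN(\b)$ so that, in addition to $\cJ(\bf u_k)\to c(\b)$, one has $\cJ'(\bf u_k)\to 0$ in the dual. Without this, two of your steps do not go through. First, in the nondegenerate case you assert that Brezis--Lieb splitting yields $\bf u\in\cN(\b)$; but splitting the Nehari identities only gives a relation between the $\bf u$-part and the remainder, not that either piece separately satisfies the constraint. The paper instead uses $\cJ'(\bf u_k)\to 0$ to conclude directly that the weak limit is a \emph{critical point}, hence (being nontrivial) lies on $\cN(\b)$. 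Second, and more seriously, in the vanishing case your claim that ``the rescaled first component does not vanish'' is not justified by concentration--compactness alone: having positive $L^{p^*}$-mass in a ball of unit radius for the rescaled function does not prevent $w_{1,k}\rightharpoonup 0$. The paper excludes this by testing the equation with $\vp^p u_{1,k}$ (which requires $\partial_1\cJ(\bf u_k)\to 0$), choosing $\delta<\tfrac{1}{2}S_p^{N/p}$, and deriving the contradiction $\|\vp w_{1,k}\|_p^p\le \delta S_p^{-1}\|\vp w_{1,k}\|_p^p+o(1)$. Likewise, showing that the rescaled limit $\bf w$ actually \emph{solves} \eqref{eq:system} in $\mathbb H$ or $\rn$ (the theorem asserts it is a solution, not merely an element of $\cN$) is obtained by passing $\partial_i\cJ(\bf u_k)\vp_k=o(1)$ through the change of variables; your route via ``$\bf w\in\cN$ plus natural constraint'' only closes once you already know $\cJ(\bf w)=c$, which itself needs $\bf w$ to be a critical point.

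A smaller point: the orbit-spreading alternative in Lemma~\ref{lem:G} is excluded not because each bubble carries energy $\tfrac{\ell}{N}S_p^{N/p}$, but simply because $m$ disjoint balls each carrying $\delta$ in $|u_{1,k}|^{p^*}$ would force $m\delta\le |u_{1,k}|_{p^*}^{p^*}\le C$ for every $m$, which is already a contradiction.
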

	
	\begin{proof}
Let $\bf u_k=(u_{1,k},\ldots,u_{\ell,k})\in\cN(\b)$ be such that $\cJ(\bf u_k)\rightarrow c(\b)$. By Ekeland's variational principle, we may assume that $\cJ'(\bf u_k)\rightarrow 0$ in $[(D_0^{1,p}(\b))^{\ell}]'$. Since $\cJ(\bf u_k)=\frac{1}{N}\|\bf u_k\|_p^p$, \ $(\bf u_k)$ is bounded in $(D_0^{1,p}(\b))^{\ell}$. Hence, it contains a subsequence that converges weakly to some $\bf u=(u_1,\ldots,u_\ell)$ in $\mathscr{D}(\b)$.  Following an argument similar to that presented in \cite[Appendix A]{cl}, we see that $\bf u$ is a solution of the system \eqref{eq:system}.

If $\bf u\neq \bf 0$ then $\bf u \in \cN(\b)$ and 
		\[
		c(\b) \leq \cJ(\bf u) =\frac{1}{N}\sum_{i=1}^{\ell}\|u_{i}\|_p^p\leq \lim_{k\rightarrow \infty}\frac{1}{N}\sum_{i=1}^{\ell}\|u_{i,k}\|_p^p=c(\b).
		\]  
Hence, $c(\b)=\cJ(\bf u)$ and $\bf u$ is a least energy solution of \eqref{eq:system} in $\b$. Furthermore, since $\| u_{i,k}\|_p\rightarrow \|u_k\|_p$ and $D_0^{1,p}(\b)$ is a uniformly convex Banach space, $u_{i,k}\rightarrow u_k$ strongly in $D_0^{1,p}(\b)$ for all $i=1,\dots, \ell$; see \cite[Proposition 3.32]{brezis}. This shows that statement $(I)$ holds true if $\bf u\neq \bf 0$.
		
Now assume that $\bf u=\bf 0$. Fix $0<2\delta<S_p^{\frac{N}{p}}$.  Since $\beta<0$, using \eqref{eq:Sp} we see that 
$$\frac{\ell}{N}|u_{1,k}|_{p^*}^{p^*}\geq\cJ(\bf u_k)=c(\b)+o(1)\geq \frac{\ell}{N}S_p^\frac{N}{p}+o(1)>\frac{\ell}{N}\delta,$$
for large enough $k$. Hence, there exist bounded sequences $(\eps_k)$ and $(\zeta_k)$ in $(0,\infty)$ and $\rn$, respectively, such that
		\begin{equation*}
		\delta=\sup_{x\in \rn}\int_{B_{\eps_k}(x)}|u_{1,k}|^{p^*}=\int_{B_{\eps_k}(\zeta_k)}|u_{1,k}|^{p^*}\quad \text{for all }\; k\in \mathbb{N}.
		\end{equation*}
Applying Lemma \ref{lem:G} to the sequences $(\eps_k)$ and $(\zeta_k)$ we have two options. Let us see that option $(ii)$ yields a contradiction. Indeed,  suppose that for each $m\in \mathbb{N}$ there exist $g_1,\dots, g_m\in G$
		such that $\eps_k^{-1}|g_i\zeta_k-g_j\zeta_k|\rightarrow\infty$, if $i\neq j$. Then, for $k$ large enough,
		\[
		B_{\eps_k}(g_i\zeta_k)\cap B_{\eps_k}(g_j\zeta_k)=\emptyset \quad\text{if \ } i\neq j.
		\]
		Since $u_{1,k}$ is $G$-invariant, we have
		\[
		m\delta=m\int_{B_{\eps_k}(\zeta_k)}|u_{1,k}|^{p^*}= \sum_{i=1}^m\int_{B_{\eps_k}(g_i\zeta_k)}|u_{1,k}|^{p^*}\leq |u_{1,k}|_{p^*}^{p^*} \quad \text{for all  } m\in \mathbb{N}.
		\]
This is a contradiction, because $(u_{1,k})$ is bounded in $D_0^{1,p}(\b)$. Therefore, option $(i)$ in Lemma \ref{lem:G} must hold true, i.e., after passing to a subsequence, there exist $\eta_k\in \{0\}\times \r^{N-4}$ and $C_1>0$ such that $\eps_k^{-1}|\zeta_k-\eta_k|<C_1$ for all $k\in \mathbb{N}$. Thus, $B_{\eps_k}(\zeta_k)\subset B_{(C_1+1)\eps_k}(\eta_k)$ and 
		\begin{equation}\label{eq:auxiliar2}
			0<\delta=\int_{B_{\eps_k}(\zeta_k)}|u_{1,k}|^{p^*}\leq \int_{B_{(C_1+1)\eps_k}(\eta_k)}|u_{1,k}|^{p^*}.
		\end{equation}
		Passing to a subsequence we have
		\[
		d_k=\eps_k^{-1}\operatorname{dist}(\eta_k,\b)\rightarrow d\in[0,\infty].
		\]
		If $0\leq d<\infty$, for each $k$ we choose $\xi_k\in \partial \b \cap (\{0\}\times \r^{N-4})$ such that $|\eta_k-\xi_k|=\operatorname{dist}(\eta_k,\b)$. On the other hand, if $d=\infty$, we set $\xi_k=\eta_k$. Then, inequality \eqref{eq:auxiliar2} implies that $\operatorname{dist}(\xi_k,\b)\leq (C_1+1)\eps_k$ and, since $d=\infty$, necessarily $\xi_k\in \b$. Summing up, there are two possibilities:
		\begin{itemize}
			\item[(1)] either $\xi_k\in \partial\b \cap (\{0\}\times \r^{N-4})$,
			\item[(2)] or $\xi_k\in \b\cap (\{0\}\times \r^{N-4})$ and $\eps_k^{-1}\operatorname{dist}(\xi_k,\partial\b)\rightarrow \infty$.
		\end{itemize}
		In both cases there is a constant $C_0>0$ such that $\eps_k^{-1}|\zeta_k-\xi_k|<C_0$. Thus, $B_{\eps_k}(\zeta_k)\subset B_{(C_0+1)\eps_k}(\xi_k)$ for $k$ large enough and as a consequence 
		\begin{equation}\label{eq:auxiliar3}
0<\delta=\sup_{x\in \rn}\int_{B_{\eps_k}(x)}|u_{1,k}|^{p^*}=\int_{B_{\eps_k}(\zeta_k)}|u_{1,k}|^{p^*}\leq \int_{B_{(C_0+1)\eps_k}(\xi_k)}|u_{1,k}|^{p^*}.
		\end{equation}
		Let $\Omega_k:=\{x\in \rn : \eps_kx+\xi_k\in \b\}$ and consider the function $\bf w_k=(w_{1,k},\dots, w_{\ell,k})$ defined by 
		\[
		w_{i,k}(x):=\eps_k^{\frac{N-p}{p}}u_{i,k}(\eps_kx+\xi_k)\; \text{ if }\; x\in \Omega_k, \qquad w_{i,k}(x):=0 \;\text{ otherwise.}
		\]
		Since $\xi_k\in \{0\}\times \r^{N-4}$ we have that $g\xi_k=\xi_k$ for every $g\in G$ and $\rho_{\ell}\xi_k=\xi_k$. Therefore $\bf w_k\in \mathscr{D}(\rn)$ for every $k$ and, as $\|w_{i,k}\|_p=\|u_{i,k}\|_p$ for $i=1,\dots,\ell$, after passing to a subsequence we have that
		\[
		\bf w_k\rightharpoonup \bf w \text{  weakly in  } \mathscr{D}(\rn),\quad \bf w_k\rightarrow \bf w \text{ in  }(L_{loc}^p(\rn))^{\ell},\quad \text{and  } \bf w_k\rightarrow \bf w \text{  a.e. in  } \rn.
		\]
		Additionally, \eqref{eq:auxiliar3} yields
		\begin{equation}\label{eq:auxiliar4}
			\delta= \sup_{x\in \rn}\int_{B_1(x)}|w_{1,k}|^{p^*}\leq \int_{B_{C_0+1}(0)}|w_{1,k}|^{p^*}.
		\end{equation}
Next we show that $\bf w\neq 0$. 

Arguing by contradiction, assume that $\bf w=0$. Let $\vp\in \cC_c^{\infty}(\rn)$, $\vp\geq 0$, and set $\vp_k(x):= \vp\big(\frac{x-\xi_k}{\eps_k}\big)$. Then, $(\vp_k^p u_{1,k})$ is a bounded sequence in $D_0^{1,p}(\b)$. Since $\cJ'(\bf u_k)\rightarrow 0$ in $[(D_0^{1,p}(\b))^{\ell}]'$, a direct computation shows that
\begin{align} \label{eq:w1}
o(1)&=\partial_1\cJ(\bf u_k)[\vp_k^pu_{1,k}]\\
&=\irn|\nabla w_{1,k}|^{p-2}\nabla w_{1,k}\cdot \nabla (\vp^pw_{1,k})-\irn|w_{1,k}|^{p^*}\vp^p-\beta\sum_{\overset{j=1}{j\neq 1}}\irn |w_{j,k}|^{\frac{p^*}{2}}|w_{1,k}|^{\frac{p^*}{2}}\vp^p.\nonumber
\end{align}
Now, since \ $	|\nabla w_{1,k}|^{p-2}\nabla w_{1,k}\cdot \nabla (\vp^pw_{1,k})= |\vp\nabla w_{1,k}|^p+p\vp^{p-1}w_{1,k}|\nabla w_{1,k}|^{p-2}\nabla w_{1,k}\cdot \nabla \vp$ \ and
\begin{align*}
\irn \left|\vp^{p-1}w_{1,k}|\nabla w_{1,k}|^{p-2}\nabla w_{1,k}\cdot \nabla \vp\right|&\leq \irn \vp^{p-1}|\nabla w_{1,k}|^{p-1}|w_{1,k}\nabla \vp|\\
&\leq C\|w_{1,k}\|_p^{p-1}\Big(\int_{\supp(\vp)}|w_{1,k}|^p\Big)^{1/p}=o(1),
\end{align*}
(because $w_{1,k}\rightarrow 0$ in $L_{loc}^p(\rn)$), we have that
\begin{equation}\label{eq:w2}
\irn|\nabla w_{1,k}|^{p-2}\nabla w_{1,k}\cdot \nabla (\vp^pw_{1,k})=\irn|\vp\nabla w_{1,k}|^p+o(1).
\end{equation}
Using the mean value theorem we see that
		\begin{align*}
\Big||\nabla(\vp w_{1,k})|^p-|\vp\nabla w_{1,k}|^p\Big| & = \Big||\vp\nabla w_{1,k}+w_{1,k}\nabla \vp|^p-|\vp\nabla w_{1,k}|^p\Big| \\
			&\leq p\Big(|w_{1,k}||\nabla \vp|+|\vp||\nabla w_{1,k}|\Big)^{p-1}|w_{1,k}\nabla \vp|.
		\end{align*}
So integrating and using $w_{1,k}\rightarrow 0$ in $L_{loc}^p(\rn)$ again we get
		\begin{align*}
\left|\irn |\nabla(\vp w_{1,k})|^p-\irn|\vp\nabla w_{1,k}|^p \right| & \leq C\int_{\supp(\vp)}\Big(|w_{1,k}|+|\nabla w_{k,1}|\Big)^{p-1}|w_{1,k}|\\
			&\leq C\left(\int_{\supp(\vp)}\Big||w_{1,k}|+|\nabla w_{1,k}|\Big|^{p}\right)^{\frac{p-1}{p}}\left(\int_{\supp(\vp)}|w_{1,k}|^p\right)^{\frac{1}{p}}=o(1).
		\end{align*}
Thus,
\begin{equation}\label{eq:w3}
\irn |\nabla(\vp w_{1,k})|^p=\irn|\vp\nabla w_{1,k}|^p+o(1).
\end{equation}
Now assume that, in addition, $\supp(\vp)\subset B_1(x)$ for some $x\in\rn$. Then, since $\beta<0$, from  \eqref{eq:w1}, \eqref{eq:w2}, \eqref{eq:w3} and our choice of $\delta$ we get
\begin{align*}
\irn|\nabla(\vp w_{1,k})|^p	&\leq \irn|w_{1,k}|^{p^*}\vp^{p}+o(1) \leq \Big(\int_{B_1(x)}|w_{1,k}|^{p*}\Big)^{\frac{p^*-p}{p^*}}\Big(\irn|\vp w_{1,k}|^{p^*}\Big)^{\frac{p}{p^*}}+o(1)\\
&\leq \delta S_p^{-1}\irn |\nabla (\vp w_{1,k})|^p+o(1)<\Big(\frac{1}{2}\Big)^{\frac{N}{p}}\irn |\nabla (\vp w_{1,k})|^p+o(1).
\end{align*} 
It follows that $\|\vp w_{1,k}\|_p=o(1)$ and, hence, that $|\vp w_{1,k}|_{p^*}=o(1)$ for every $\vp\in\cC^\infty_c(B_1(x))$ such that $\vp\geq 0$ and any $x\in\rn$. This implies that $w_{1,k}\rightarrow 0$ in $L^{p^*}_{loc}(\rn)$, contradicting \eqref{eq:auxiliar4}. Therefore, $\bf w\neq \bf 0$, as claimed.
		
	Note that, as $u_{k,1}\rightharpoonup 0$ and $w_{k,1}\rightharpoonup w_1\neq 0$ weakly in $D^{1,p}(\rn)$, we have $\eps_k\rightarrow 0$. 
		
	Let us now analyze the alternatives (1) and (2). 
	
	If (1) holds true, passing to a subsequence we have that $\xi_k\rightarrow \xi\in \partial \b \cap (\{0\}\times \r^{N-4})$. Then $\mathbb{H}:=\{x\in \rn \,:\, \xi\cdot x<0\}$ is invariant under the action of the group generated by $G\cup \{\rho_{\ell}\}$) and $\bf w\in \mathscr{D}(\mathbb{H})$. If $\varphi\in \cC_c^{\infty}(\mathbb{H})$, then $\supp (\varphi) \subset \Omega_k$ for $k$ large enough. Hence, the support of the function $\varphi_k(x):=\eps_k^{\frac{p-N}{p}}\varphi\big(\frac{x-\xi_k}{\eps_k}\big)$ is contained in $\b$ and performing a change of variables we obtain
		\begin{align*}
			\partial_1\cJ(\bf w_k)\varphi&=\irn |\nabla w_{k,1}|^{p-2}\nabla w_{1,k}\cdot \nabla \varphi-\irn |w_{1,k}|^{p^*-2}w_{1,k}\varphi-\beta\sum_{\overset{j=1}{j\neq 1}}\irn|w_{j,k}|^{\frac{p^*}{2}}|w_{1,k}|^{\frac{p^*}{2}-2}w_{1,k}\varphi\\
			&= \partial_1\cJ(\bf u_k)\varphi_k=o(1).
		\end{align*} 
		As $\bf w_k\rightharpoonup\bf w$ weakly in $\mathscr{D}(\rn)$, following an argument similar to that presented in \cite[Appendix A]{cl} we derive that
		\[
		\partial_i\cJ(\bf w)\varphi=\lim_{k\to\infty}\partial_i\cJ(\bf w_k)\varphi=0\quad \text{for all }\; \varphi\in \cC_c^{\infty}(\mathbb{H}),\;\; i=1,\dots, \ell.
		\]
		Therefore, $\bf w\in \cN(\mathbb{H})$, and using Lemma \ref{lem:same infimum}, we have
		\[
		c(\mathbb{H})\leq \cJ(\bf w)=\frac{1}{N}\sum_{i=1}^{\ell}\|\bf w_{i}\|_p^p\leq\liminf_{k\to \infty} \frac{1}{N}\sum_{i=1}^{\ell}\|\bf w_{i,k}\|_p^p=\liminf_{k\to \infty} \frac{1}{N}\sum_{i=1}^{\ell}\|\bf u_{i,k}\|_p^p=c(\b)= c(\mathbb{H}).
		\]
		This shows that $\bf w$ is a least energy pinwheel solution of \eqref{eq:system} in $\mathbb{H}$ and that $\bf w_k\rightarrow \bf w$ strongly in $\mathscr{D}(\mathbb{H})$. Setting $ \widehat{\bf w}_k=(\widehat{w}_{1,k},\dots, \widehat{w}_{\ell,k})$ with
		\[
	\what w_{i,k}(x)=\eps_k^{\frac{p-N}{p}}w_i\Big(\frac{x-\xi_k}{\eps_k}\Big)
		\]
		we get that $\|\bf u_k-\widehat{\bf w}_k\|_p=\|\bf w_k-\bf w\|_p=o(1)$. Therefore, if (1) is true, statement (II) is necessarily true.
		
		Assume now alternative (2), i.e., $\xi_k\in \b \cap (\{0\}\times \r^{N-4})$ and $\eps_k^{-1}\operatorname{dist}(\xi_k,\partial\b)\rightarrow \infty$. Then, for any $\varphi\in \cC_c^{\infty}(\rn)$, we have that $\supp (\varphi)\subset \Omega_k$ if $k$ is large enough. Arguing as before we see that
		\[
		\partial_i\cJ(\bf w)\varphi=\lim_{k\to\infty}\partial_i\cJ(\bf w_k)\varphi=0\quad \text{for all }\; \varphi\in \cC_c^{\infty}(\mathbb{R}^N),\;\; i=1,\dots, \ell.
		\]
		Thus, $\bf w \in \cN(\rn)$ and using Lemma \ref{lem:same infimum} we obtain
		\[
		c(\mathbb{R}^N)\leq \cJ(\bf w)=\frac{1}{N}\sum_{i=1}^{\ell}\|\bf w_{i}\|_p^p\leq\liminf_{k\to \infty} \frac{1}{N}\sum_{i=1}^{\ell}\|\bf w_{k,i}\|_p^p=\liminf_{k\to \infty} \frac{1}{N}\sum_{i=1}^{\ell}\|\bf u_{k,i}\|_p^p=c(\b)= c(\mathbb{R}^N).
		\]
		This shows that $\bf w$ is a least energy pinwheel solution of \eqref{eq:system} and that $\bf w_k\rightarrow \bf w$ strongly in $\mathscr{D}(\rn)$. So, setting $\widehat{\bf w}_k=(\widehat{w}_{1,k},\dots, \widehat{w}_{\ell,k})$ with $\what w_{i,k}$ as above, we get that $\|\bf u_k-\widehat{\bf w}_k\|_p=\|\bf w_k-\bf w\|_p=o(1)$. Therefore, if (2) is true, statement (III) is necessarily true.
		
		The proof is now complete.
	\end{proof}
\smallskip	

	\begin{proof}[Proof of Theorem \ref{thm:existence}]
		Since $\cN(\b)\neq \emptyset$ by Lemma \ref{lem:nehari}, there is a sequence $(\bf u_k)$ in $\cN(\b)$ such that $\cJ(\bf u_k)\to c(\b)$. It follows from Theorem \ref{thm:minimizing} that the system \eqref{eq:system} has a least energy pinwheel solution either in $\b$, or in $\mathbb{H}$, or in $\rn$. From Lemma \ref{lem:same infimum} we get that the energy of any one of them is $c(\rn)$. Therefore, if the system has a least energy pinwheel solution either in $\b$ or in $\mathbb{H}$, its trivial extension to $\rn$ is a least energy pinwheel solution to the system \eqref{eq:system} in $\rn$.  This proves that, in all three cases, there exists a least energy pinwheel solution $\bf v=(v_1,\ldots,v_\ell)$ to the system \eqref{eq:rn_symmetric_system}. Then, $\bf u=(|v_1|,\ldots,|v_\ell|)\in \cN(\rn)$ and $\cJ(\bf u)=c(\rn)$. So $\bf u$ is a least energy pinwheel solution to \eqref{eq:rn_symmetric_system} with nonnegative components.
	\end{proof}
	
\begin{remark} \label{rem:unique continuation}
\emph{When $p=2$ the statements $(I)$ and $(II)$ of Theorem \ref{thm:minimizing} are discarded by the unique continuation property for systems that was proved in \cite{chs}, leaving $(III)$ as the only option; see \cite[Theorem 3.2]{cfs}. This immediately yields the existence of a pinwheel solution of the semilinear system in $\rn$. However, even for $p=2$, it is not clear whether the components can vanish in some open subset of $\rn$. The result in \cite{chs} guarantees only that not all components vanish in the same open subset.}
\end{remark}
	
	\section{Infinitely many solutions to the quasilinear equation}
	\label{sec:nodal}
	
	To prove Theorem \ref{thm:multiplicity} we use the following result from \cite{cl}.
	
\begin{theorem}\label{thm:cl}
Let $\Gamma$ be a closed subgroup of $O(N)$ and $\phi:\Gamma\to\mathbb{Z}_2:=\{1,-1\}$ be a continuous homomorphism of groups with the following properties: 
\begin{itemize}
\item[$(S1)$]For each $x \in \rn$, either $\#\Gamma x=\infty$ or $\#\Gamma x=1$, where $\Gamma x:=\{\gamma x:\gamma\in\Gamma\}$ is the $\Gamma$-orbit of $x$ and $\#\Gamma x$ is its cardinality.
\item[$(S2)$]$\phi:\Gamma\to\mathbb{Z}_2$ is surjective.
\item[$(S3)$]There exists $\xi\in\rn$ such that $\{\gamma\in\Gamma:\gamma\xi=\xi\}\subset\ker\phi$.
\end{itemize}
Then, the problem \eqref{eq:equation} has a nontrivial solution $w$ that satisfies
\begin{equation*}
w(\gamma x)=\phi(\gamma)w(x)\quad\text{for all \ }\gamma\in\Gamma, \ x\in\rn.
\end{equation*}
In particular, $w$ is nonradial and changes sign.
\end{theorem}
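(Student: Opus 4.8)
The plan is to produce the solution by minimizing the energy over a Nehari manifold inside the space of $\phi$-equivariant functions, using $(S2)$–$(S3)$ to make that manifold nonempty and $(S1)$ to control the loss of compactness, exactly in the spirit of \cite{cl}.

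\emph{Step 1: reduction to the equivariant subspace.} For $\gamma\in\Gamma$ and $w\in D^{1,p}(\rn)$ set $(\gamma\ast w)(x):=\phi(\gamma)\,w(\gamma^{-1}x)$. Since $\phi$ is a homomorphism this defines a linear action of $\Gamma$ on $D^{1,p}(\rn)$ by isometries, the functional $J(w):=\frac1p\|w\|_p^p-\frac1{p^*}|w|_{p^*}^{p^*}$ is invariant under it, and its fixed-point space is precisely $X^\phi:=\{w\in D^{1,p}(\rn):w(\gamma x)=\phi(\gamma)w(x)\ \text{for all}\ \gamma\in\Gamma\}$. As $D^{1,p}(\rn)$ is reflexive and uniformly convex, the principle of symmetric criticality \cite[Theorem 2.2]{ko} shows that every critical point of $J|_{X^\phi}$ solves \eqref{eq:equation} and has the required equivariance. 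I would then work on $\cN^\phi:=\{w\in X^\phi\setminus\{0\}:\|w\|_p^p=|w|_{p^*}^{p^*}\}$, on which $J=\frac1N\|\cdot\|_p^p$, and put $c^\phi:=\inf_{\cN^\phi}J$; the Sobolev inequality gives $c^\phi\ge\frac1N S_p^{N/p}>0$.

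\emph{Step 2: $\cN^\phi\neq\emptyset$.} Here $(S2)$ and $(S3)$ are used. By surjectivity of $\phi$, $\ker\phi$ has index two in $\Gamma$, hence is open and closed; fix $\gamma_-\in\Gamma$ with $\phi(\gamma_-)=-1$. Let $\xi$ be as in $(S3)$, so its isotropy group $\Gamma_\xi\subset\ker\phi$ (in particular $\#\Gamma\xi=\infty$ by $(S1)$, otherwise $\Gamma=\Gamma_\xi\subset\ker\phi$, contradicting $(S2)$). Choose $f\in\cC^\infty_c(\rn)$ with $f\geq0$, $f(\xi)>0$, supported in a small ball about $\xi$, and set $w_0:=\int_\Gamma\phi(\gamma)\,f(\gamma^{-1}\,\cdot)\,\d\mu(\gamma)$, where $\mu$ is the normalized Haar measure of the compact group $\Gamma$. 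A change of variable in $\mu$ shows $w_0\in X^\phi$, and $w_0$ has bounded support and lies in $D^{1,p}(\rn)$. If $\supp f$ is small enough, a compactness argument gives $\{\gamma\in\Gamma:\gamma^{-1}\xi\in\supp f\}\subset\ker\phi$ (it is an open neighbourhood of $\Gamma_\xi$), so the integrand defining $w_0(\xi)$ is nonnegative and strictly positive near the identity; hence $w_0(\xi)>0$, so $w_0\neq0$ and $t\,w_0\in\cN^\phi$ for a suitable $t>0$.

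\emph{Step 3: minimization and compactness, and conclusion.} Take $(w_k)\subset\cN^\phi$ minimizing; by Ekeland's principle we may assume $J'(w_k)\to0$. Then $(w_k)$ is bounded and, up to a subsequence, $w_k\rightharpoonup w$ in $X^\phi$. Since for the $p$-Laplacian the weak limit of a Palais–Smale sequence is a critical point (the argument of \cite[Appendix A]{cl}), $w$ solves \eqref{eq:equation} and is $\phi$-equivariant. If $w\neq0$ we are done (in fact $w\in\cN^\phi$ is a least energy $\phi$-equivariant solution). If $w=0$, fix $0<\delta<S_p^{N/p}$ with $\delta<Nc^\phi$ and, as in the proof of Theorem \ref{thm:minimizing}, use the concentration function of $|w_k|^{p^*}$ to find $\eps_k>0$ and $\zeta_k\in\rn$ with $\sup_x\int_{B_{\eps_k}(x)}|w_k|^{p^*}=\int_{B_{\eps_k}(\zeta_k)}|w_k|^{p^*}=\delta$. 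Applying the $\Gamma$-dichotomy supplied by $(S1)$ (the analogue of Lemma \ref{lem:G}, with $\{0\}\times\r^{N-4}$ replaced by the fixed-point subspace $F:=\{x\in\rn:\#\Gamma x=1\}$): after a subsequence, either $\eps_k^{-1}\dist(\zeta_k,F)$ is bounded, or for each $m$ there are $\gamma_1,\dots,\gamma_m\in\Gamma$ with $\eps_k^{-1}|\gamma_i\zeta_k-\gamma_j\zeta_k|\to\infty$. The second option is impossible, since $\Gamma$-invariance of $|w_k|^{p^*}$ would then force $m\delta\le|w_k|_{p^*}^{p^*}$ for all $m$. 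Hence the first option holds; picking $\eta_k\in F$ with $\eps_k^{-1}|\zeta_k-\eta_k|$ bounded and setting $v_k(x):=\eps_k^{(N-p)/p}w_k(\eps_k x+\eta_k)$, one checks — using that $\eta_k$ is fixed by all of $\Gamma$ and that dilations and translations are $J$-invariant isometries of $D^{1,p}(\rn)$ — that $(v_k)$ is a $\phi$-equivariant Palais–Smale sequence with $v_k\rightharpoonup V$ in $X^\phi$, and that $V\neq0$ (by the argument used in Theorem \ref{thm:minimizing} to show nontriviality of the rescaled limit: test $J'(w_k)$ against $\varphi^p w_k$ for cut-offs $\varphi$ supported in unit balls and invoke the concentration level $\delta$). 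Thus $V$ is a nontrivial $\phi$-equivariant solution of \eqref{eq:equation}. Finally, in either case, since $\phi$ is surjective there is $\gamma_-$ with $w(\gamma_-x)=-w(x)$; as $w\not\equiv0$ this forces $w$ to change sign and rules out $w$ being radial.

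\emph{Main obstacle.} The delicate point is Step 3: establishing the geometric dichotomy from $(S1)$ for a general closed subgroup $\Gamma\le O(N)$ (the analogue of Lemma \ref{lem:G}), and then verifying that the rescaled weak limit around the moving fixed point $\eta_k$ is genuinely a nontrivial solution — which, exactly as in Theorem \ref{thm:minimizing}, relies on the good behaviour of Palais–Smale sequences for the $p$-Laplacian under weak limits.
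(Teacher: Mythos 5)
The paper itself does not prove this theorem---it simply cites \cite[Theorem 3.1]{cl}---and your proposal reconstructs essentially that argument: symmetric criticality on the $\phi$-equivariant subspace, minimization on the equivariant Nehari manifold (made nonempty via the Haar average in Step 2, which is correct since $\ker\phi$ is open and closed and contains the isotropy group of $\xi$), and a concentration-compactness dichotomy in which $(S1)$ either produces arbitrarily many $\eps_k$-separated copies of the concentration ball (impossible by boundedness) or forces concentration near the fixed-point set $F$, where the rescaled limit is again $\phi$-equivariant and nontrivial. The one step you flag as delicate, the orbit dichotomy for a general closed $\Gamma\le O(N)$, does go through: $F$ is a linear subspace, so writing $x=x_F+x_\perp$ one gets $\mathrm{diam}(\Gamma x)=\mathrm{diam}(\Gamma x_\perp)\ge c\,\dist(x,F)$ by scaling and compactness of the unit sphere of $F^\perp$ (on which $0$ is the only fixed point), and in the unbounded case a limiting infinite orbit supplies $m$ pairwise separated points for every $m$.
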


\begin{proof}
See \cite[Theorem 3.1]{cl}.
\end{proof}
\smallskip
	
	\begin{proof}[Proof of Theorem \ref{thm:multiplicity}]
		Let $G$ act on $\rn$ as in \eqref{eq:G} and $\varrho_{\ell}$ be as defined in \eqref{eq:rho}. Let $\Gamma_{\ell}$ be the subgroup of $O(N)$ generated by $G\cup \{\varrho_{\ell}\}$ and $\phi_{\ell}:\Gamma_{\ell}\rightarrow \mathbb{Z}_2:=\{-1,1\}$ be the homomorphism of groups given by $\phi_{\ell}(g)=1$ if $g\in G$ and $\phi_{\ell}(\varrho_{\ell})=-1$. Since $\varrho_{\ell}$ has order $2\ell$, this homomorphism is well defined and it is clearly surjective. The $\Gamma_{\ell}$-orbit $\Gamma_{\ell} x:=\{ \gamma x\, :\, \gamma\in \Gamma_{\ell}\}$ of a point $x=(z,y)\in \mathbb{C}^2\times \r^{N-4}\equiv\rn$ satisfies
\begin{equation*}
\#\Gamma_\ell x=
\begin{cases}
\infty &\text{if \ }z\neq 0,\\
1&\text{if \ }z=0.
\end{cases}
\end{equation*}		
Furthermore, if $e_1=(1,0,\dots ,0)$, then $\{\gamma \in \Gamma_{\ell}\, :\, \gamma e_1=e_1\}=\{1\}\subset \ker \phi_{\ell}$. This proves that $(S1)-(S3)$ hold true. So, by Theorem \ref{thm:cl}, there exists a nonradial sign-changing solution $w_{\ell}$ of equation \eqref{eq:equation} such that
		\[
		w_{\ell}(gx)=w_{\ell}(x)\quad \text{and }\;\; w_{\ell}(\varrho_{\ell}x)=-w_{\ell}(x)\quad \text{for all  }\; g\in G, \ x\in \rn.
		\]
To prove that problem \eqref{eq:equation} has infinitely many nonradial sign-changing solutions we show next that
$$w_1,w_2,\ldots,w_{2^k},\ldots$$
are all distinct. To this end if suffices to show that, if $\ell=nm$ with $n$ even and $u$ and $v$ are nontrivial functions that satisfy
		\[
		u(\varrho_{\ell}x)=-u(x) \;\;\; \text{and   }\; v(\varrho_{m}x)=-v(x)\;\;\; \text{ for every } x\in \rn,
		\]
then $u\neq v$. Indeed, arguing by contradiction, assume that $u=v$ and choose $x_0\in \rn$ such that $u(x_0)=v(x_0)\neq 0$. Since $\varrho_{\ell}^n=\varrho_{m}$ and $n$ is even, we have that
		\[
		u(\varrho_{m}x_0)=u(\varrho_{\ell}^nx_0)=u(x_0)=v(x_0)=-v(\varrho_{m} x_0)= -u(\varrho_{m} x_0),
		\]
		which is a contradiction.
	\end{proof}
	
For the Yamabe problem \eqref{eq:equation} with $p=2$ these solutions were recently obtained in \cite[Theorem 1.3]{cfs}.


	\bigskip

	\begin{flushleft}
		\textbf{Mónica Clapp} and \textbf{Víctor A. Vicente-Benítez}\\
		Instituto de Matemáticas\\
		Universidad Nacional Autónoma de México \\
		Campus Juriquilla\\
		76230 Querétaro, Qro., Mexico\\
		\texttt{monica.clapp@im.unam.mx} \\
		\texttt{va.vicentebenitez@im.unam.mx} 
		
	\end{flushleft}
	
\end{document}